\def\C{\mathbb{C}} \def\P{\mathbb{P}} \def\Q{\mathbb{Q}}   \def\N{\mathbb{N}}
 \def\O{\mathcal{O}}
\def\z{\boldsymbol{z}}
\def\bydef{\coloneqq}
\DeclareRobustCommand\diff{\@ifnextchar[{\@@diff}{\@diff}}
\def\@diff{\mathop{}\!{\mathrm{d}}}
\def\@@diff[#1]{\cramped{\diff^{#1}}} 
\DeclareRobustCommand\Diff{\@ifnextchar[{\@@Diff}{\@Diff}}
\def\@Diff{\mathop{}\!{\mathrm{D}}\cdot}
\def\@@Diff[#1]{\cramped{\mathop{}\!{\mathrm{D}^{#1}}}\cdot} 
\DeclareMathOperator{\mult}{mult}
\DeclareMathOperator{\Bs}{Bs}
\DeclareMathOperator{\BB}{\mathbf{B}}
\DeclareMathOperator{\rk}{rank}
\DeclareMathOperator{\Aut}{Aut}
\DeclarePairedDelimiter{\abs}{\lvert}{\rvert} 
\DeclarePairedDelimiter{\Set}{\{}{\}}
\def\myfrac#1#2{
  \mathchoice
  {\raisebox{.3ex}{\scalebox{.9}{\(#1\)}}/\raisebox{-.3ex}{\scalebox{.9}{\(#2\)}}}
  {\raisebox{.3ex}{\scalebox{.9}{\(#1\)}}/\raisebox{-.3ex}{\scalebox{.9}{\(#2\)}}}
  {\raisebox{.1ex}{\(\scriptstyle #1\)}/\raisebox{-.1ex}{\(\scriptstyle #2\)}}
  {\raisebox{.1ex}{\(\scriptscriptstyle #1\)}/\raisebox{-.1ex}{\(\scriptscriptstyle #2\)}}
}
\def\T_#1{{\Omega^{\mathrlap{\vee}}}_{#1}}
\let\leq\leqslant\let\geq\geqslant
\newtheorem{alphtheo}{\theoname}%
\newenvironment{ques}{\begin{enonce*}[remark]{Question}}{\end{enonce*}}
\title[Quasi-positive orbifold cotangent bundles]{Quasi-positive orbifold cotangent bundles ;\\pushing further an example by Junjiro Noguchi.}
\author{Lionel Darondeau}
\address{IMAG, Univ Montpellier, CNRS, Montpellier, France.}
\email{lionel.darondeau@normalesup.org}
\author{Erwan Rousseau}
\address{Institut Universitaire de France \& Aix Marseille Univ, CNRS, Centrale Marseille, I2M, Marseille, France.}
\email{erwan.rousseau@univ-amu.fr}
\thanks{E. R. was partially supported by the ANR project \lq\lq FOLIAGE\rq\rq{}, ANR-16-CE40-0008.}
\keywords{Ampleness, symmetric differential forms, orbifold cotangent bundles, hyperplane arrangements, Fermat covers, value distribution theory}
\date{\today{} (version 2)}
\begin{document}
\begin{abstract}
  In this work, we investigate the positivity of logarithmic and orbifold cotangent bundles along hyperplane arrangements in projective spaces. We show that a very interesting example given by Noguchi (as early as in 1986) can be pushed further to a very great extent. Key ingredients of our approach are the use of Fermat covers and the production of explicit global symmetric differentials. This allows us to obtain some new results in the vein of several classical results of the literature on hyperplane arrangements. These seem very natural using the modern point of view of augmented base loci, and working in Campana's orbifold category.
  As an application of our results, we derive two new orbifold hyperbolicity results, going beyond some classical results of value distribution theory.
\end{abstract}
\maketitle

\section{Introduction}
\subsection*{Positive and quasi-positive cotangent bundles}
In recent years, families of varieties with ample cotangent bundles have attracted a lot of attention (see \textit{e.g.} \cite{Deb05,Xie18,BDa18,Den20,mohsen,CR20,Etesse}), and there have been significant progress in this area (even though finding an explicit surface with ample cotangent bundle in \(\P^4\) is still a tremendous challenge).
With the development of our understanding, the enriching of techniques, and in connection with hyperbolicity problems, some variations of this problem have started to emerge.
For instance, in \cite{BDe18}, the authors have been interested in the determination of the augmented base locus of logarithmic cotangent bundles along normal crossing divisors in projective spaces.
The \textsl{stable base locus} \(\BB(L)\subseteq X\) of a line bundle \(L\) on a projective variety \(X\) is defined as the intersection of the base loci of all multiples of \(L\).
Then, the \textsl{augmented base locus} (or \textsl{non-ample locus}) \(\BB_{+}(L)\subseteq X\) is
\[
  \BB_{+}(L)
  \bydef
  \bigcap_{q\in\N}
  \BB(qL-A),
\]
for any ample line bundle \(A\to X\).
The augmented base locus of a line bundle is a geometric measure of the positivity of its sheaf of global sections. In particular, it is different from the base variety when the line bundle is big, and it is empty when the line bundle is ample.
For vector bundles, one studies the augmented base locus of the Serre line bundle on their projectivizations.
The idea of augmented base loci for vector bundles can be traced back to~\cite{Nog77}, where it was already used in connection to hyperbolicity (see below).

In various cases, one does not really need this augmented base locus to be empty in order to obtain interesting geometric consequences, and in many interesting settings, such as logarithmic and orbifold setting, one actually cannot expect the augmented base locus to be empty. This leads to the definition of several notions of positivity, where one only ask for a certain geometric control of the non-ample locus.  For example a line bundle \(L\) is said to be \textsl{ample modulo a divisor \(D\)} when \(\BB_{+}(L)\subseteq D\).  Note that if \(L\) is ample modulo \(D\), then \(L\) is necessarily big.

Denote by \(p\colon X'\bydef\P(\Omega(X,D))\to X\) the projectivized bundle of rank \(1\) quotients of the logarithmic cotangent bundle of a smooth logarithmic pair \((X,D)\). In this work, we will use the following definition.
\begin{defi}
  \label{defi:quasiample}
  We say that the cotangent bundle of \((X,D)\) is \textsl{ample modulo boundary} if
  \[
    p\big(\BB_+(\O_{X'}(1))\big)\subseteq D.
  \]
\end{defi}

It is a weaker positivity property that the one introduced in~\cite{BDe18}. 
Consider the various residue exact sequences coming with a simple normal crossing divisor in \(\P^n\). One gets a lot of trivial quotients supported on the boundary components. Then, the projectivizations of these trivial quotients give subvarieties in the projectivized logarithmic cotangent bundle, that constitute obstructions to the ampleness of the logarithmic cotangent bundle (see~\cite[Sect.~2.3]{BDe18}).
In particular, one has always
\[
  D\subseteq p\big(\BB_+(\O_{X'}(1))\big).
\]
One can  hence view Definition~\ref{defi:quasiample} as asking the projection of the augmented base locus to be minimal.
Brotbek and Deng define \(\Omega(X,D)\) to be ``almost ample'' when the augmented base locus \(\BB_+(\O_{X'}(1))\) itself (and not its projection) is minimal. This means that the augmented base locus corresponds exactly to the trivial quotients of the cotangent bundle given by the residue short exact sequence.
Then, one has the following (\cite[Theo.~A]{BDe18}).
\begin{theo}[Brotbek--Deng]
  \label{theo:BDe}
  Let \(Y\) be a smooth projective variety of dimension \(n\), with a very ample line bundle \(H\to Y\).
  For \(c\geq n\) and \(d\geq(4n)^{n+2}\), the logarithmic cotangent bundle along the sum \(D=D_{1}+\dotsb+D_{c}\) of \(c\) general hypersurfaces \(D_{1},\dotsc,D_{c}\in\abs{H^{d}}\) is ``almost ample''.
\end{theo}
This result is optimal concerning the number of components of the boundary divisor (\cite[Prop.~4.1]{BDe18}).
\begin{prop}[Brotbek--Deng]
  The logarithmic cotangent bundle along a simple normal crossing divisor with \(c<n\) irreducible components in \(\P^{n}\) is never big. 
\end{prop}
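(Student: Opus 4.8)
The plan is to show that $\mathcal{O}_{X'}(1)$ is not big on $X' \bydef \P\big(\Omega(\P^n, D)\big)$ by bounding the growth of $h^0\big(\P^n, S^m\Omega(\P^n,D)\big) = h^0\big(X', \mathcal{O}_{X'}(m)\big)$. Write $D = D_1 + \dots + D_c$, with the $D_i$ smooth and meeting transversally. Since $\Omega(\P^n,D)$ has rank $n$, the variety $X'$ has dimension $2n-1$, so it suffices to prove that $h^0\big(\P^n, S^m\Omega(\P^n,D)\big)$ grows at most polynomially of degree $c-1$ in $m$: as $c-1 \leq n-2 < 2n-1 = \dim X'$, this forces the Iitaka dimension of $\mathcal{O}_{X'}(1)$ to be strictly smaller than $\dim X'$, that is, $\Omega(\P^n,D)$ is not big.

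To obtain such a bound I would filter the symmetric powers using the residue exact sequence $0 \to \Omega_{\P^n} \to \Omega(\P^n,D) \xrightarrow{\mathrm{res}} \bigoplus_{i=1}^{c} \mathcal{O}_{D_i} \to 0$. Setting $R_j \bydef \im\big(S^{j}\Omega_{\P^n} \otimes S^{m-j}\Omega(\P^n,D) \to S^{m}\Omega(\P^n,D)\big)$ gives a filtration $S^m\Omega_{\P^n} = R_m \subseteq R_{m-1} \subseteq \dots \subseteq R_0 = S^m\Omega(\P^n,D)$ whose $j$-th subquotient is a quotient of $S^{j}\Omega_{\P^n} \otimes S^{m-j}\big(\bigoplus_i \mathcal{O}_{D_i}\big)$. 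Transversality of the $D_i$ gives $S^{m-j}\big(\bigoplus_i \mathcal{O}_{D_i}\big) = \bigoplus_{a \in \N^c,\, |a| = m-j} \mathcal{O}_{D_{\mathrm{supp}(a)}}$, where $D_S \bydef \bigcap_{i \in S} D_i$; hence $R_j/R_{j+1}$ is a quotient of $\bigoplus_{|a| = m-j} S^{j}\Omega_{\P^n}\big|_{D_{\mathrm{supp}(a)}}$, and consequently
\[
  h^0\big(\P^n, S^m\Omega(\P^n,D)\big) \;\leq\; h^0\big(\P^n, S^m\Omega_{\P^n}\big) \;+\; \sum_{j=0}^{m-1}\ \sum_{\substack{a \in \N^c \\ |a| = m-j}}\ h^0\big(D_{\mathrm{supp}(a)},\, S^{j}\Omega_{\P^n}|_{D_{\mathrm{supp}(a)}}\big).
\]

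The decisive input is the Euler sequence $0 \to \Omega_{\P^n} \to \mathcal{O}_{\P^n}(-1)^{\oplus (n+1)} \to \mathcal{O}_{\P^n} \to 0$: restricting it to a stratum $D_S$ (a smooth complete intersection) and applying $S^{j}$ produces an inclusion $S^{j}\Omega_{\P^n}|_{D_S} \hookrightarrow \mathcal{O}_{D_S}(-j)^{\oplus \binom{j+n}{n}}$. This is where the hypothesis $c < n$ is genuinely used: it guarantees that every nonempty stratum has positive dimension $n - |S| \geq n - c \geq 1$, so $\mathcal{O}_{D_S}(-j)$ is the dual of an ample line bundle and has no global sections. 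Hence $h^0\big(D_S, S^{j}\Omega_{\P^n}|_{D_S}\big) = 0$ for all $j \geq 1$, while for $j = 0$ it equals $h^0(\mathcal{O}_{D_S}) = 1$ since a positive-dimensional complete intersection in $\P^n$ is connected (the same reasoning, with $D_S = \P^n$, disposes of the term $h^0(\P^n, S^m\Omega_{\P^n})$). Only the $j = 0$ summands survive, so $h^0\big(\P^n, S^m\Omega(\P^n,D)\big) \leq \#\{a \in \N^c : |a| = m\} = \binom{m+c-1}{c-1} = O(m^{c-1})$, which is the required bound.

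I expect the only delicate point to be the symmetric-power bookkeeping of the second step. Since the residue sequence is not a sequence of sub-bundles, one must check with some care that quotienting $S^{j}\Omega_{\P^n} \otimes S^{m-j}\Omega(\P^n,D)$ by the part landing in $R_{j+1}$ really yields a quotient of $S^{j}\Omega_{\P^n} \otimes S^{m-j}\big(\Omega(\P^n,D)/\Omega_{\P^n}\big)$ — a routine but slightly fussy computation in symmetric algebras, for which only the surjectivity assertion is needed, not an isomorphism. Everything else (the residue and Euler sequences, vanishing of sections of negative line bundles, connectedness of positive-dimensional complete intersections) is standard. It is reassuring that the estimate degenerates precisely at $c = n$: then a $0$-dimensional stratum appears, the pieces $S^{j}\Omega_{\P^n}|_{D_S}$ become free of rank $\sim j^{\,n-1}$ supported on finitely many points, and the bound weakens to $O(m^{2n-1})$ — consistent with the fact that bigness can occur once $c \geq n$.
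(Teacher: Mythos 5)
The paper itself does not prove this proposition; it quotes it from \cite[Prop.~4.1]{BDe18}. So I can only judge your argument on its own terms. Your strategy --- bound \(h^0\bigl(X',\O_{X'}(m)\bigr)=h^0\bigl(\P^n,S^m\Omega(\P^n,D)\bigr)\) by \(O(m^{c-1})\) using the residue filtration, kill every positive-weight graded piece with the twisted triviality \(S^j\Omega_{\P^n}|_{D_S}\hookrightarrow\O_{D_S}(-j)^{\oplus\binom{j+n}{n}}\) on the positive-dimensional strata guaranteed by \(c<n\), and conclude since \(c-1<2n-1=\dim X'\) --- is the natural one and is essentially complete. The dimension count, the identification \(S^{m-j}(\bigoplus_i\O_{D_i})=\bigoplus_{\abs{a}=m-j}\O_{D_{\mathrm{supp}(a)}}\), the vanishing of \(h^0(\O_{D_S}(-j))\) for \(j\geq 1\), the connectedness giving \(h^0(\O_{D_S})=1\), and the final count \(\binom{m+c-1}{c-1}=O(m^{c-1})\) are all correct.

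The one genuine gap is exactly the point you flagged and then resolved in the wrong direction. You bound \(h^0(R_j/R_{j+1})\) by \(h^0\) of \(P_j\bydef\bigoplus_{\abs{a}=m-j}S^j\Omega_{\P^n}|_{D_{\mathrm{supp}(a)}}\) on the sole grounds that \(P_j\) \emph{surjects} onto \(R_j/R_{j+1}\). But \(h^0\) is not monotone under quotient maps --- the Euler quotient \(\O_{\P^1}(-1)^{\oplus 2}\twoheadrightarrow\O_{pt}\) raises \(h^0\) from \(0\) to \(1\) --- so your closing remark that ``only the surjectivity assertion is needed, not an isomorphism'' is backwards: surjectivity gives an inequality in the useless direction, and it is injectivity (hence the isomorphism) that you actually use. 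What must be proved is that the natural surjections \(S^j\Omega_{\P^n}\otimes S^{m-j}\bigl(\bigoplus_i\O_{D_i}\bigr)\to R_j/R_{j+1}\) are isomorphisms. This is true, and it is precisely where the simple normal crossing hypothesis enters the algebra: in adapted local coordinates the components are \(\{z_i=0\}\), \(i=1,\dotsc,c\), the inclusion \(\Omega_{\P^n}\hookrightarrow\Omega(\P^n,D)\) is diagonal in the frames \((dz_i)\) and \((dz_1/z_1,\dotsc,dz_c/z_c,dz_{c+1},\dotsc,dz_n)\), and a direct monomial (or length) computation, using that \(z_1,\dotsc,z_c\) is a regular sequence, identifies \(R_j/R_{j+1}\) with \(\bigoplus_{\abs{a}=m-j}S^j\Omega_{\P^n}|_{D_{\mathrm{supp}(a)}}\). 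With that local verification supplied, your proof is complete.
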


The effective degree bounds in~\cite{BDe18} being quite large, it is a natural question to ask what would be the optimal degree bound (when one relaxes the condition on the number of components) ?
An associated problem is to find some low degree examples of pairs with ample cotangent bundles modulo boundary.

To the best of our knowledge, before~\cite{BDe18}, the only example of such quasi-positivity of the cotangent bundle is due to Noguchi~\cite{Nog86}. It is an example given as early as in 1986, in the paper in which he defined logarithmic jet bundles. Noguchi introduced the following positivity property.

\begin{defi}
  Let \((X,D)\) be a smooth logarithmic pair. Denote \(V\bydef X\setminus D\). A vector bundle \(E\) on \(X\) is said ``quasi-negative'' over \(V\) if there is a proper morphism \(\varphi\colon E\to\C^{N}\) to an affine space, such that \(\varphi\) is an isomorphism from \(E\rvert_{V}\setminus O\) to \(\varphi(E)\setminus\varphi(E\rvert_{D})\), where \(O\) denotes the zero section.
\end{defi}
Then, one has the following (\cite{Nog86}).
\begin{theo}[Noguchi]
  The logarithmic tangent bundle along a general arrangement \(\mathscr{A}\) of \(6\) lines in \(\P^{2}\) is ``quasi-negative'' over \(\P^{2}\setminus\mathscr{A}\).
\end{theo}
The rough idea of the proof is that using an explicit basis of the logarithmic cotangent sheaf along an arrangement of lines in general position, one is able to construct an immersive Kodaira map (under some further explicit genericity condition).
Some combinatorial work allows one to identify this supplementary genericity condition as asking that in the dual projective space parametrizing hyperplanes, the points of the arrangement do not all lie in a single quadric.

Now, one has:
\begin{lemm}
  \label{lemm:bplus}
  Let \(L\) be a globally generated line bundle.
  If \(\abs{L}\) defines an immersive map on \(X\setminus V\), then \(\BB_+(L)\subseteq V\).
\end{lemm}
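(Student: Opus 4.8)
The plan is to reduce to Nakamaye's description of the augmented base locus of a big and nef line bundle. We may assume \(V\neq X\), so \(X\setminus V\) is a nonempty Zariski-open set. First I would record the easy structural facts: since \(L\) is globally generated it is nef, and \(\abs{L}\) defines a morphism \(\varphi\colon X\to\P^{N}\) with \(L\cong\varphi^{\ast}\O_{\P^{N}}(1)\). The hypothesis means that \(d\varphi\) is injective at every point of \(X\setminus V\); in particular \(\varphi\) is unramified there, hence generically finite onto its image \(Y\bydef\varphi(X)\), so that \(\dim Y=\dim X\defby n\). The projection formula then gives \((L^{n})=\deg(\varphi)\,(\O_{Y}(1)^{n})=\deg(\varphi)\,\deg Y>0\), so the nef line bundle \(L\) has positive top self-intersection and is therefore big.

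Being big and nef, \(L\) falls under Nakamaye's theorem (see \cite{ELMNP}), which identifies \(\BB_{+}(L)\) with the \emph{null locus} \(\operatorname{Null}(L)\): the union of all irreducible subvarieties \(Z\subseteq X\) of dimension \(\geq 1\) with \((L^{\dim Z}\cdot Z)=0\). So the plan is then to check that every such \(Z\) is contained in \(V\). Suppose not; then \(Z\setminus V\) is dense in the irreducible variety \(Z\), and at a general smooth point \(z\in Z\) the differential of \(\varphi|_{Z}\), being the restriction of the injective map \(d\varphi_{z}\) to the subspace \(T_{z}Z\subseteq T_{z}X\), is injective. Hence \(\varphi|_{Z}\) is generically finite onto its image \(W\bydef\varphi(Z)\), with \(\dim W=\dim Z\defby k\), and the projection formula once more yields \((L^{k}\cdot Z)=\deg(\varphi|_{Z})\,\deg W\geq 1\), contradicting \((L^{k}\cdot Z)=0\). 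Thus \(\operatorname{Null}(L)\subseteq V\), which is the claim.

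The only genuinely non-elementary ingredient is Nakamaye's theorem; the rest is the projection formula together with the observation that a morphism immersive on a dense open subset of a subvariety is generically finite onto its image there. The one point I expect to need a little care is precisely this passage to subvarieties \(Z\): one should work over the still-dense open locus \((Z\setminus V)\cap Z_{\mathrm{sm}}\), where \(T_{z}Z\) is an honest linear subspace of \(T_{z}X\) on which \(d\varphi_{z}\) restricts injectively. If one wished to avoid quoting Nakamaye, one could instead argue by hand — for a given \(x\in X\setminus V\), pull back to \(X\) very ample sections of \(\O_{Y}(m)\) for \(m\gg 0\) to separate \(x\) from nearby points and from its tangent directions, then unwind the definition of \(\BB_{+}\) — but this essentially reproves the case of Nakamaye's theorem that is used.
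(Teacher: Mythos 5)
Your argument is correct, but it takes a genuinely different route from the paper. You first observe that immersivity at one point forces \(L\) to be big (and it is nef, being globally generated), and then invoke Nakamaye's theorem (in the form of Ein--Lazarsfeld--Musta\c{t}\u{a}--Nakamaye--Popa) identifying \(\BB_+(L)\) with the null locus \(\operatorname{Null}(L)\); a subvariety \(Z\not\subseteq V\) is then excluded because \(\varphi|_Z\) is generically finite onto its image, so the projection formula gives \((L^{\dim Z}\cdot Z)=\deg(\varphi|_Z)\deg\varphi(Z)>0\). The paper instead quotes the characterization of Boucksom--Cavalieri--Lazi\'c (\cite[Theo.~A]{BCL14}): \(\BB_+(L)\) is the smallest closed set off which \(\abs{qL}\) is eventually an isomorphism onto its image; immersivity gives discrete fibres of \(\phi_{\abs{L}}\) on \(X\setminus V\), and the Stein factorization \(\phi_{\abs{qL}}\) of \(\phi_{\abs{L}}\) for \(q\gg0\) then has discrete \emph{and connected}, hence trivial, fibres there. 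Both arguments rest on one nontrivial external theorem each. Your numerical approach is robust and makes the bigness of \(L\) explicit, and it cleanly sidesteps the injectivity issue (an immersion need not be injective) that the paper has to handle via connectedness of Stein fibres; its small costs are that Nakamaye's theorem is usually stated for \(L\) nef on a smooth (or at least normal) projective variety --- harmless here, since the lemma is applied to \(\O_{X'}(1)\) on the smooth projectivized bundle \(X'=\P(\Omega(X,D))\) --- and that you should say a word about why \(X\setminus V\) may be taken open (it is contained in the open immersivity locus, which is what one really uses). The paper's route has the advantage of directly producing the geometric output that is exploited later, namely that \(\abs{qL}\) embeds \(X\setminus V\).
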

\begin{proof}
  According to \cite[Theo.~A]{BCL14}, the augmented base locus \(\BB_{+}(L)\) is the smallest closed subset \(V\) of \(X\) such that the linear system \(\abs{qL}\) defines an isomorphism of \(X\setminus V\) onto its image for sufficiently large \(q\).

  For \(q\) large enough, the Stein factorization of \(\phi_{\abs{L}}\) is given by
  \[
    X
    \stackrel{\abs{qL}}\longrightarrow
    \phi_{\abs{qL}}(X)
    \stackrel{\nu_q}\longrightarrow
    \phi_{\abs{L}}(X),
  \]
  for some finite morphism \(\nu_q\) (\cite[Lemma~2.1.28]{Laz}).
  Now, since \(\abs{L}\) defines an immersive map on \(X\setminus V\), the fibers of \(\phi_{\abs{L}}\) are discrete.
  An immediate consequence is that on this set \(\phi_{\abs{qL}}\) has discrete and connected fibers.
  In other words, for sufficiently large \(q\) the linear system \(\abs{qL}\) defines an isomorphism of \(X\setminus V\) onto its image.
\end{proof}
This lemma allows us to reformulate the result of Noguchi as follows.
\begin{theo}[Noguchi]
  \label{theo:noguchi}
  The logarithmic cotangent bundle along an arrangement \(\mathscr{A}\) of \(d\geq6\) lines in \(\P^{2}\) in general position with respect to hyperplanes and to quadrics is ample modulo \(\mathscr{A}\).
\end{theo}

As mentioned above, in this smooth logarithmic setting, one cannot expect the orbifold cotangent bundle to be plainly ample, and we have explained that ampleness modulo boundary is somehow optimal. 
Concerning the optimal number of lines, combining Noguchi's result with Theorem~\ref{theo:optimal} below, we now that it can only be \(5\) or \(6\). It is not clear yet how to prove that for \(5\) lines one cannot expect ampleness modulo boundary of the logarithmic cotangent bundle.

\subsection*{Hyperbolicity of complements of hypersurfaces}
A very connected research area is the one of complex hyperbolicity. Indeed, the following result is now classical (see~\cite{Nog77} for the compact case). Given a logarithmic symmetric differential form \(\omega\) on a smooth logarithmic pair \((X,D)\), which vanishes on an ample divisor, all entire maps \(f\colon\C\to X\setminus D\) lands in the zero locus of \(\omega\). 
In other words, \(f(\C)\subseteq p(\BB_+(\O_{X'}(1)))\).
If \(\Omega(X,D)\) is ample modulo boundary, one immediately gets that all these curves are constant. One says that the pair \((X,D)\) is \textsl{Brody hyperbolic}.
We see that here, there is no need to have global ampleness in order to obtain interesting geometric applications.

It is thus an interesting companion question to ask about the hyperbolicity of complements of hypersurfaces. Concerning this question, a very interesting setting seems to be the classical setting of hyperplane arrangements, for which optimal degree bounds are reached. 

To sum up some classical results of value distribution theory:
in the case of hyperplane arrangements, the (conjectural) optimal degree bounds are reached.

\begin{conj}[Kobayashi]
  The complement of a general high degree hypersurface in \(\P^{n}\) is Brody-hyperbolic.
\end{conj}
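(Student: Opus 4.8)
The strategy is to reduce Brody hyperbolicity to a positivity statement, and then to establish that positivity statement for a general member of \(|\O_{\P^n}(d)|\) with \(d\gg 0\). Concretely: if \(D\subset\P^n\) is a smooth hypersurface and \(X'=\P(\Omega(\P^n,D))\), and if the logarithmic cotangent bundle of \((\P^n,D)\) is ample modulo boundary in the sense of Definition~\ref{defi:quasiample}, then — by the logarithmic vanishing recalled above — every entire curve \(f\colon\C\to\P^n\setminus D\) satisfies \(f(\C)\subseteq p\big(\BB_+(\O_{X'}(1))\big)\subseteq D\); since \(f\) has image in \(\P^n\setminus D\), this forces \(f\) to be constant, i.e. \(\P^n\setminus D\) is Brody hyperbolic. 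Hence the whole problem becomes the assertion: \emph{for \(d\gg_n 0\) and \(D\in|\O_{\P^n}(d)|\) general, \(\Omega(\P^n,D)\) is ample modulo boundary.}

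For this last statement I see two natural routes. The first is by degeneration onto Theorem~\ref{theo:BDe}: fix \(d'\geq(4n)^{n+2}\) and general \(D_1,\dots,D_n\in|\O_{\P^n}(d')|\), so that \(\Omega(\P^n,D_1+\dots+D_n)\) is ``almost ample'', in particular ample modulo boundary; set \(d\bydef nd'\) and consider a generic pencil \(D_t=\{\tprod_i P_i+tQ=0\}\) whose central fibre is \(D_1+\dots+D_n\) and whose general fibre is smooth, then propagate ampleness modulo boundary from the central fibre to \((\P^n,D_t)\) for \(t\) general. The second route is the direct, effective construction of Brotbek–Deng type: from the defining equation of \(D\) together with sufficiently generic auxiliary linear forms one writes down enough global logarithmic symmetric differentials on \((\P^n,D)\) vanishing on an ample divisor, and checks by an explicit genericity/combinatorial analysis that their common zero locus lies inside \(D\); the admissible lower bound on \(d\) then drops out of the bookkeeping. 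A third, complementary possibility — closer in spirit to the Fermat-cover philosophy of this paper — is to pass to a high-order cyclic cover \(Y\to\P^n\) branched along \(D\): the logarithmic statement downstairs becomes a statement about honest global symmetric differentials on the smooth projective variety \(Y\), where the available machinery on almost-ample cotangent bundles applies, and one descends.

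The main obstacle is the same in either of the first two approaches: controlling the base locus. In the degeneration approach, positivity of a Serre line bundle is an open condition \emph{for a fixed projective bundle}, but here the projectivized logarithmic cotangent sheaves vary and the log cotangent sheaf is locally free only away from \(\mathrm{Sing}(D)\), so over the reducible central fibre one must argue on a log-resolution (or on a normal-crossings model) and show that the globally generated sub-linear-system witnessing ampleness modulo boundary on the central fibre extends — after raising the symmetric power and \(d\) — to a sub-system over a neighbourhood whose base locus meets each nearby fibre \(X'_t\) only over \(D_t\). In the direct approach the same difficulty reappears as the need to make the genericity conditions on the auxiliary forms strong enough that no ``extra'' base locus survives in \(\P^n\setminus D\), which is precisely where the large effective bound on \(d\) is spent. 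Once ampleness modulo boundary is in hand, the hyperbolicity conclusion is automatic; one moreover obtains for free that \(\P^n\setminus D\) is Kobayashi hyperbolic and hyperbolically embedded, and that the \(D\) for which this holds form a Zariski-dense — conjecturally, explicitly describable — subset of \(|\O_{\P^n}(d)|\).
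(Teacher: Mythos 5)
The statement you are trying to prove is stated in the paper as an open \emph{conjecture}; the paper gives no proof of it, and only records it as motivation before listing the partial results that are actually known (Zaidenberg, Green, Snurnitsyn). So there is no proof of the paper's to compare yours against; the question is whether your proposal could work on its own. It cannot, for a reason that is spelled out in the paper itself.

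Your entire strategy funnels through the intermediate claim that for \(d\gg_n 0\) and \(D\in\abs{\O_{\P^n}(d)}\) general and smooth, the logarithmic cotangent bundle \(\Omega(\P^n,D)\) is ample modulo boundary. This claim is false: by the proposition of Brotbek--Deng quoted immediately after Theorem~\ref{theo:BDe}, the logarithmic cotangent bundle along a simple normal crossing divisor with \(c<n\) irreducible components in \(\P^n\) is \emph{never big}, whatever the degree. A single smooth hypersurface has \(c=1<n\) for \(n\geq 2\), so \(\Omega(\P^n,D)\) is not big, a fortiori not ample modulo boundary, and the vanishing theorem for symmetric logarithmic differentials gives you nothing. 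This also kills both of your proposed routes to the claim: the degeneration onto \(D_1+\dots+D_n\) cannot transfer positivity to the nearby smooth fibre, because the smooth fibre provably lacks the sections (the obstruction is not a base-locus-control issue but an outright non-existence of twisted symmetric log differentials); and the "direct Brotbek--Deng-type construction" of global symmetric differentials for a one-component boundary is exactly what the non-bigness proposition rules out. The cyclic-cover variant runs into the same wall: a cyclic cover of \(\P^n\) branched along one hypersurface is a hypersurface in a weighted projective space, and its cotangent bundle is likewise not big. This is precisely why the paper works with arrangements of at least \(n\) (in fact \(\binom{n+2}{2}\)) components, and why the single-hypersurface case remains a conjecture at the level of order-one differentials: any genuine attack on it (as in the later work of Brotbek--Deng on the logarithmic Kobayashi conjecture) must use higher-order logarithmic \emph{jet} differentials, which can be big even when the cotangent bundle is not. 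Your reduction to "ample modulo boundary of \(\Omega(\P^n,D)\)" is therefore a reduction to a false statement, and the proof does not go through.
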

\begin{theo}[Zaidenberg~\cite{Zai87,Zai93}]
  For a general hypersurface \(D\) in \(\P^{n}\) of degree \(2n\), there is a line in \(\P^{n}\) meeting \(D\) in at most two points.
\end{theo}
\begin{theo}[Bloch, Cartan, Green~\cite{Gre72}]
  The complement of an arrangement of \(2n+1\) hyperplanes in general position in \(\P^{n}\) is Brody-hyperbolic.
\end{theo}
\begin{theo}[Snurnitsyn~\cite{Snu86,Zai93}]
  \label{theo:optimal}
  For any arrangement \(\mathscr{A}\) of \(2n\) hyperplanes in \(\P^{n}\), there is a line in \(\P^{n}\) meeting \(\mathscr{A}\) in only two points.
\end{theo}

And these results have also their counterparts concerning weak hyperbolicity.
\begin{conj}[Green--Griffiths--Lang]
  On a logarithmic pair \((X,D)\) of logarithmic general type, there is a proper subvariety \(\mathrm{Exc}(X)\subsetneq X\) containing the images of all non-constant entire maps \(f\colon\C\to X\setminus D\).
\end{conj}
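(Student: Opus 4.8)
The Green--Griffiths--Lang conjecture is open in full generality, so the following is a strategy rather than a complete argument, organised along the standard lines that reduce such statements to a positivity property of the (logarithmic, or orbifold) cotangent bundle together with a value-distribution input; I will indicate at the end which part is the genuine obstacle.

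First I would work on the projectivized logarithmic cotangent bundle \(p\colon X'=\P(\Omega(X,D))\to X\) and invoke the logarithmic tautological inequality of Noguchi: any non-constant entire map \(f\colon\C\to X\setminus D\) lifts, via its derivative, to \(f'\colon\C\to X'\), and for any ample \(A\to X\) and any global logarithmic symmetric differential \(\omega\in H^0(X,\mathrm{Sym}^m\Omega(X,D)\otimes A^{-1})\), the image \(f(\C)\) must be contained in the zero divisor of \(\omega\) — otherwise one contradicts the negativity of the degree after pulling back to \(\C\), by a Nevanlinna / logarithmic derivative estimate. The intersection of the zero loci of all such \(\omega\), over all \(m\), is exactly \(p(\BB_+(\O_{X'}(1)))\), by the same reasoning as in Lemma~\ref{lemm:bplus}. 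So it suffices to take \(\mathrm{Exc}(X)\bydef p(\BB_+(\O_{X'}(1)))\) and to show that it is a \emph{proper} subvariety, \ie that \(\O_{X'}(1)\) is big. This reduction is unconditional.

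The whole difficulty is therefore the implication ``\((X,D)\) of logarithmic general type \(\Rightarrow\) \(\O_{X'}(1)\) big'', \ie the existence of enough global logarithmic symmetric differentials: logarithmic general type is a numerical statement about \(K_X+D\) and does not by itself produce sections of \(\mathrm{Sym}^m\Omega(X,D)\), and this is precisely where the conjecture is open. In the settings relevant to the present paper — complements of hyperplane arrangements and the associated orbifold pairs — I would not attempt the general implication but instead produce the symmetric differentials by hand: pass to a well-chosen Fermat cover \(Y\to\P^n\) totally ramified along the arrangement, use the ramification formula to push honest symmetric differentials on \(Y\) down to orbifold symmetric differentials, and, following the explicit construction behind Theorem~\ref{theo:noguchi}, exhibit enough of them vanishing on an ample divisor under a suitable genericity hypothesis on the arrangement; \(\mathrm{Exc}(X)\) is then read off as the base locus of these explicit differentials, which is a union of linear subspaces governed by the combinatorics. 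The main obstacle is thus entirely concentrated here: in general, bridging numerical positivity of \(K_X+D\) and actual symmetric differentials is hard, open, and not expected to be resolved by these methods; and even for arrangements, one must keep the genericity conditions that force bigness compatible with the degree and number-of-components constraints, and carry out the combinatorial bookkeeping that pins down \(\mathrm{Exc}(X)\) optimally. The tautological inequality and the reduction of the second paragraph are routine; everything delicate lives in the production and base-locus analysis of the Fermat-cover symmetric differentials.
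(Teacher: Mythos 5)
This statement is the Green--Griffiths--Lang conjecture; the paper states it only as a conjecture, for context, and offers no proof. You correctly recognize that it is open and that your text is a strategy sketch rather than an argument, and your first reduction (the fundamental vanishing theorem sends every entire curve into \(p(\BB_+(\O_{X'}(1)))\), so one would like that locus to be proper) is exactly the philosophy the paper uses in its actual theorems. To that extent there is nothing to grade against.

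One substantive correction, though: you locate ``the whole difficulty'' in the implication ``\((X,D)\) of logarithmic general type \(\Rightarrow\O_{X'}(1)\) big'', as if this were merely hard. It is in fact \emph{false} in general, and the paper itself records a counterexample class: by the proposition of Brotbek--Deng quoted in the introduction, the logarithmic cotangent bundle along a simple normal crossing divisor with \(c<n\) irreducible components in \(\P^n\) is never big, even though such pairs are of logarithmic general type as soon as the total degree exceeds \(n+1\) (\eg \(\P^n\) minus one smooth hypersurface of degree \(n+2\)). So taking \(\mathrm{Exc}(X)=p(\BB_+(\O_{X'}(1)))\) cannot work for the conjecture as stated: that set is all of \(X\) in these cases. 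The standard (still conjectural) route replaces symmetric differentials by higher-order logarithmic jet differentials, whose existence under the general type hypothesis is the Green--Griffiths existence problem; the paper's results deliberately sidestep this by restricting to hyperplane arrangements with enough components, where symmetric differentials do suffice. Your Fermat-cover paragraph is consistent with what the paper does in that restricted setting, but it proves the paper's Theorems A--E, not the conjecture.
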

\begin{theo}[Borel,Green~\cite{Gre72}]
  The maps \(f\colon\C\to\P^{n}\setminus\mathscr{A}\) with values in the complement of an arrangement of \(n+2\) hyperplanes are linearly degenerate.
\end{theo}
Here, the condition \(d\geq n+2\) corresponds exactly to the general type assumption.
Notice that this is a refinement of the classical theorem of Borel, since there is no genericity assumption in the statement.

This motivates us to work with the interesting setting of hyperplane arrangements in the rest of the paper.
An underlying question is the following:
Does the optimal lower bound on the degree of hyperplane arrangement for which the logarithmic cotangent bundle is ample modulo boundary provide indications on a lower bound on the degree for which Theorem~\ref{theo:BDe} should hold?

Most of the results on hyperbolicity of complements of hyperplanes are obtained using Nevanlinna's theory of value distribution (see e.g.~\cite{Kob98} or~\cite{NW}). One of the key tools is so-called Cartan's Second Main Theorem, which allows one to study not only entire curves in complements but also entire curves intersecting the boundary divisor with prescribed multiplicities (see e.g.~\cite[Coro~3.B.46]{Kob98}). A complementary modern point of view on these \textsl{orbifold curves} is also given by the theory of Campana's orbifolds~\cite{Campana}. An alternative approach to Nevanlinna theory for orbifold hyperbolicity  is developed in \cite{CDR20}. We will pursue these ideas here, studying the augmented base loci of orbifold cotangent bundles along hyperplane arrangements. In this direction, to the best of our knowledge, there are no existing results in the literature before this work.

\subsection*{Main results of the paper}
The common thread of this work is to push further Theorem~\ref{theo:noguchi}.
We obtain three main new results in this direction (Theorems~\ref{theo:lognog}, \ref{theo:orbinog}, \ref{theo:orbibig}).
Then, we derive two new hyperbolicity results (Theorems~\ref{theo:fermathyp} and~\ref{theo:orbihyp}).

We generalize the result of Noguchi to higher dimensions. We prove the following.
\begin{alphtheo}
  The logarithmic cotangent bundle along an arrangement \(\mathscr{A}\) of 
  \(d\geq\binom{n+2}{2}\)
  hyperplanes in \(\P^{n}\) in general position with respect to hyperplanes and to quadrics is ample modulo \(\mathscr{A}\).
\end{alphtheo}

We extend the result of Noguchi to the geometric orbifold category introduced by Campana.
\begin{alphtheo}
  The orbifold cotangent bundle along an arrangement \(\mathscr{A}\) of 
  \(d\geq \binom{n+2}{2}\) 
  hyperplanes in \(\P^{n}\) in general position with respect to hyperplanes and to quadrics,
  with multiplicities \(m\geq 2n+2\),
  is ample modulo \(\mathscr{A}\).
\end{alphtheo}
Theorem~\ref{theo:noguchi} amongs to \(n=2\) in Theorem~\ref{theo:lognog}.
Theorem~\ref{theo:lognog} constitutes the case of infinite orbifold multiplicity in Theorem~\ref{theo:orbinog}.

Lastly,  we prove the positivity of orbifold cotangent bundles in all dimensions with low degrees and very low multiplicities.
\begin{alphtheo}
  For \(n\geq 2\), the orbifold cotangent bundle along an arrangement \(\mathscr{A}\) of \(d\geq 2n(\frac{2n}{m-2}+1)\) hyperplanes in \(\P^{n}\) with multiplicity \(m\geq 3\) is big.
\end{alphtheo}
Theorem~\ref{theo:orbibig} is weaker concerning positivity of the cotangent bundle but is spectacular concerning multiplicities. Remark also that taking \(m\) linear in \(n\),  one gets a linear lower bound on the degree.

Next, we derive two hyperbolicity results from Theorem~\ref{theo:orbinog} (and from its reformulation in terms of Fermat covers).
The first result is in the vein of several classical results in the literature on Fermat covers (see~\cite{Kob98,Dem97}). 
\begin{alphtheo}
  The Fermat cover associated to an arrangement \(\mathscr A\) of \(d\geq\binom{n+2}{2}\) hyperplanes in \(\P^ n\) in general position with respect to hyperplanes and to quadrics, with ramification \(m\geq 2n+2\), is Kobayashi-hyperbolic.
\end{alphtheo}

The second result could be seen as a strong hyperbolicity counterpart of the classical (weak) hyperbolicity results derived from Cartan's Second Main Theorem.
\begin{alphtheo}
  Consider an arrangement \(\mathscr A\) of \(d\) hyperplanes \(H_1,\dotsc,H_d\) in \(\P^n\) in general position with respect to hyperplanes and to quadrics, with respective orbifold multiplicities \(m_i\), and the associated orbifold divisor \(\Delta\bydef\sum_{i=0}^d(1-1/m_i)\cdot H_i\).
  If \(d\geq\binom{n+2}{2}\) and \(m_i\geq 2n+2\), the orbifold pair \((\P^n,\Delta)\) is Kobayashi-hyperbolic.
\end{alphtheo}

It is noteworthy that, even if both results are in the same flavor, Theorem~\ref{theo:orbihyp} is \emph{not} a consequence of Theorem~\ref{theo:fermathyp}.
To the best of our knowledge, both hyperbolicity results are new.

\subsection*{Organization of the paper}
The paper is organized as follows.
In §\ref{se:lognog}, we generalize the result of Noguchi to higher dimensions and prove Theorem~\ref{theo:lognog}, using an explicit cohomological method, in the spirit of the original approach by Noguchi.

In §\ref{se:campana}, we introduce precise definitions for various notions of positivity of orbifold cotangent bundles.

In §\ref{se:orbinog}, we extend the result of Noguchi to the orbifold category introduced by Campana and prove Theorem~\ref{theo:orbinog}, using a quite different explicit cohomological method. We rephrase the approach of explicit \v{C}ech cohomology on complete intersections by Brotbek in the context of what we call Fermat covers. Computations would tend to be quickly intractable when dimension grow. However, we are able to use the assumption of general type with respect to quadrics brought out in the study of the logarithmic case in order to tame a little the computations and find a quick way to the proof. 

In §\ref{se:orbibig}, we investigate the existence of orbifold symmetric forms for low multiplicities and prove Theorem~\ref{theo:orbibig}, using a non-explicit cohomological method. We derive the sought result from works by Brotbek and by Coskun--Riedl, using again Fermat covers.

In §\ref{se:hyperb}, we focus on hyperbolicity questions and we prove Theorems~\ref{theo:fermathyp} and~\ref{theo:orbihyp}, building on the results of Sect.~\ref{se:orbinog}.

\setcounter{alphtheo}{0} 
\section{Ampleness modulo boundary of the logarithmic cotangent bundle}
\label{se:lognog}
This section is devoted to prove the following generalization of Noguchi's example.
\begin{alphtheo}
  \label{theo:lognog}
  The logarithmic cotangent bundle along an arrangement \(\mathscr{A}\) of \(d\geq\binom{n+2}{2}\) hyperplanes in \(\P^{n}\) in general position with respect to hyperplanes and to quadrics is ample modulo \(\mathscr{A}\).
\end{alphtheo}
\begin{proof}
  Consider an arrangement \(\mathscr{A}\) of \(d=n+1+k\) hyperplanes \(H^{0},\dotsc,H^{n+k}\) in general linear position.
  Choose homogeneous coordinates \(Z_0,\dotsc,Z_n\) of \(\P^{n}\) in such way that \(H^{0},\dotsc,H^{n}\) are given by the equations \(Z_{i}=0\), and that \(H^{n+j}\) is given by the equation
  \[
    a_0^j Z_0+a_{1}^{j} Z_{1}+\dotsb+a_{n}^{j} Z_{n}=0,
  \]
  for some complex coefficients \(a_i^j\), for \(j=1,\dotsc,k\). 

  In the dual projective space parametrizing hyperplanes, consider the coordinate points parametrizing \(H^{0},\dotsc,H^{n}\) and the points \((a_{0}^{j},\dotsc,a_{n}^{j})\) paramatrizing \(H^{n+1},\dotsc,H^{n+k}\).
  The arrangement \(\mathscr{A}\) is in general position with respect to hyperplanes if \((n+1)\) of these points never lie in a single hyperplane, and \(\mathscr{A}\) is in general position with respect to quadrics if \(\binom{n+2}{2}\) of these points never lie in a quadric.
  Recall that \(\binom{n+2}{2}-1\) hyperplanes in general linear position in \(\P^{n}\) determine a unique quadric in the dual projective space parametrizing hyperplanes. 

  Very concretely, in our setting, the arrangement \(\mathscr{A}\) is in general position with respect to hyperplanes when the minors (of any size) of the \((n+1)\times k\) coefficient matrix 
  \[
    A
    \bydef
    \left[\left[a_i^j\right]\right]_{\substack{0\leq i\leq n\\1\leq j\leq k}}
  \]
  are non-zero. 
  Moreover, for \(k\geq\binom{n+1}{2}\), the arrangement \(\mathscr{A}\) is in general position with respect to quadrics if all the maximal minors of the \(\binom{n+2}{2}\times(n+1+k)\) matrix of all degree \(2\) monomials in the equation coefficients are non-zero.
  Putting the squares in first position, and taking the coordinate points as the first \(n+1\) points, we get in particular that all maximal minors of the \(\binom{n+1}{2}\times k\) matrix of products \(a_{i_1}^ja_{i_2}^j\) (in lexicographic order)
  \[
    A_{[2]}
    \bydef
    \left[\left[
        a_{i_1}^ja_{i_2}^j
    \right]\right]_{\substack{0\leq i_1<i_2\leq n\\1\leq j\leq k}}
  \]
  are non-zero. We will use this fact at the end of the proof.

  Outside of \(\mathscr{A}\), one can work on the affine chart \(Z_0\neq0\). 
  The equations of the \(k\) last hyperplanes become
  \[
    a_0^j+a_{1}^{j} z_{1}+\dotsb+a_{n}^{j} z_{n}
    =
    0,
  \]
  in the inhomogeneous coordinates \(z_{j}\bydef Z_{j} / Z_{0}\).
  Then a local frame of the logarithmic tangent sheaf \(\Omega^{\vee}(\P^{n},\mathscr{A})\) around the origin in \(U_{0}\) is given by
  \(
  z_{1}\frac{\partial}{\partial z_{1}},
  \dotsc,
  z_{n}\frac{\partial}{\partial z_{n}}
  \),
  and if we denote (for obvious reason)
  \[
    z_{n+j}
    \bydef
    a_0^j+a_{1}^{j} z_{1}+\dotsb+a_{n}^{j} z_{n},
  \]
  a basis of the space of global sections \(H^{0}\big(\P^{n},\Omega(\P^{n},\mathscr{A})\bigr)\) is given by
  \[
    \frac{\diff z_{1}}{z_{1}},
    \dotsc,
    \frac{\diff z_{n}}{z_{n}},
    \frac{\diff z_{n+1}}{z_{n+1}},
    \dotsc,
    \frac{\diff z_{n+k}}{z_{n+k}}.
  \]

  The Kodaira map associated to \(\abs{\O_{\P(\Omega(\P^{n},\mathscr{A}))}(1)}\), maps a point
  \[
    (z,[\xi])
    =
    (z_{1},\dotsc,z_{n};[V_{1}z_{1}{\partial}/{\partial z_{1}}+\dotsb+V_{n}z_{n}{\partial}/{\partial z_{n}}])
    \in\P(\Omega(\P^{n},\mathscr{A})),
  \]
  to the point \(\varphi(\z,[\xi])\bydef[V_{1}:\dotso:V_{n}:\varphi^{1}(\boldsymbol{z},\boldsymbol{V}):\dotso:\varphi^{k}(\boldsymbol{z},\boldsymbol{V})]\in\P^{n+k-1}\), where:
  \[
    \varphi^{j}(\z,\boldsymbol{V})
    \bydef
    \frac{a_{1}^{j} V_{1}z_{1}+\dotsb+a_{n}^{j} V_{n}z^{n}}{a_0^j+a_{1}^{j} z_{1}+\dotsb+a_{n}^{j} z^{n}}.
  \]

  We will prove that under the assumptions of the theorem, \(\varphi\) gives an immersion. Then, we obtain the result by Lemma~\ref{lemm:bplus}.

  The coordinates \(V_i\) cannot be simultaneously zero. Regarding the symmetries of \(\varphi\), it is sufficient to prove that \(\varphi\) is immersive on one affine chart \(V_i\neq 0\). 
  Let us thus work on the chart \(V_1\neq0\), in affine coordinates \(v_i=V_i/V_1\), and in the affine chart ``\(Z_0\neq0\)'' in \(\P^{n+k-1}\).
  One has then:
  \[
    \varphi(\z,[\xi])
    =
    (v_{2},\dotsc,v_{n},\varphi^{1}(\boldsymbol{z},\boldsymbol{v}),\dotsc,\varphi^{k}(\boldsymbol{z},\boldsymbol{v}))
  \]
  and
  \[
    \varphi^{j}(\z,\boldsymbol{v})
    \bydef
    \frac{a_{1}^{j} z_{1}+a_{2}^{j} v_{2}z_{2}+\dotsb+a_{n}^{j} v_{n}z^{n}}{a_0^j+a_{1}^{j} z_{1}+\dotsb+a_{n}^{j} z^{n}}.
  \]

  The Jacobian matrix of \(\varphi\) with respect to the coordinates \((\z,\boldsymbol{v})\) is the matrix:
  \[
    \begin{pmatrix}
      0&\dotso&0&1&&0\\
      \vdots&^\cdot\cdot_\cdot&\vdots&&^\cdot\cdot_\cdot&\\
      0&\dotso&0&0&&1\\
       &&&*&\dotso&*\\
       &\partial\varphi^{i}/\partial z_{j}&&\vdots&^\cdot\cdot_\cdot&\vdots\\
       &&&*&\dotso&*\\
    \end{pmatrix}.
  \]
  Its rank is thus \(n-1+\rk(J)\), where \(J\bydef\big(\partial\varphi^{i}/\partial z_{j}\big)\). 

  Let us write by convention \(v_1=1\) from now on. 

  The simple computation 
  \[
    \partial\varphi^{j}/\partial z_{i}
    =
    \frac{
      a_0^ja_i^jv_i
      +
      a_{i}^{j}a_1^j (v_i-v_1)z_{1}+\dotsb+a_{i}^{j}a_n^j (v_i-v_n)z_{n}
    }
    {(a_0^j+a_{1}^{j} z_{1}+\dotsb+a_{n}^{j} z_{n})^2}.
  \]
  shows that this matrix can be written as a matrix product \(J=M\cdot A_{[2]}/(z_{n+1}\dotsm z_{n+k})^2\).
  Here the columns of \(M\) are
  \(M^{i}=v_{i}E_{i}\) for \(i=1,\dotsc,n\) and then
  \(M^{(i_1,i_2)}=(v_{i_1}-v_{i_2})(z_{i_2}E_{i_1}-z_{i_1}E_{i_2})\),
  where \(E_{1},\dotsc,E_{n}\) is the canonical basis of \(\C^{n}\) for \(1\leq i_1<i_2\leq n\) (in lexicographic order). E.g. for \(n=3\):
  \[
    M
    \bydef
    \begin{pmatrix}
      v_{1}&0&0&(v_{1}-v_{2})z_{2}&(v_{1}-v_{3})z_{3}&0\\
      0&v_{2}&0&(v_{2}-v_{1})z_{1}&0&(v_{2}-v_{3})z_{3}\\
      0&0&v_{3}&0&(v_{3}-v_{1})z_{1}&(v_{3}-v_{2})z_{2}\\

    \end{pmatrix}
  \]

  Points where \(\varphi\) is not an embedding are those where \(\rk(M\cdot A_{[2]})<n\). 
  We claim that \(\rk(M)=n\). If not, considering the first minor \(\abs{M^{1}\dotso M^{n}}\), one infers that at least one of the \(v_{i}\) has to be \(0\). Assume thus that \(v_{p+1},\dotsc,v_{n}\) are zero but \(v_{1},\dotsc,v_{p}\) are not. Note that \(p\geq 1\), since \(v_1=1\).
  The minor \(\abs{M^{1}\dotso M^{p} M^{(p,p+1)}\dotso M^{(p,n)}}\) is then \(v_{1}\dotsm v_{p}(-v_{p}z_{p})^{n-p+1}\) which is not zero since \(z_{p}\neq0\). This is a contradiction.
  Since \(k\geq\binom{n+1}{2}\), the matrix \(A_{[2]}\) has more columns than rows. By the general position assumption, it is of maximal row rank. Therefore \(\rk(J)=\rk(M\cdot A_{[2]})=\rk(M)=n\). This ends the proof of Theorem~\ref{theo:lognog}.
\end{proof}

\begin{rema}
  Observe that for \(6\) lines in \(\P^2\), we retrieve the generic condition brought out by Noguchi, by elementary linear algebra manipulations (in \cite{Nog86}'s convention, \(a_0^0=a_1^0=a_2^0=1\) and also \(a_{0}^{1}=a_{0}^{2}=1\)).
\end{rema}
\begin{rema}
  We do not really need the general position assumption for \(d>\binom{n+2}{2}\), but we only need that at least \(\binom{n+2}{2}\) of the \(d\) hyperplanes satisfy it.
\end{rema}

\begin{ques}
  For the critical degree \(d=\binom{n+2}{2}\),
  is there an obstruction to positivity of logarithmic cotangent bundles if all hyperplanes lie in a single quadric ?
\end{ques}

\section{Positivity of orbifold cotangent bundles}
\label{se:campana}
\subsection{Campana's orbifold category}
Before proceeding to the proof, let us first make some recall.

A \textsl{smooth orbifold pair} is a pair \((X,\Delta)\), where \(X\) is a smooth projective variety and where \(\Delta\) is a \(\Q\)-divisor on \(X\) with only normal crossings and with coefficients between \(0\) and \(1\).
In analogy with ramification divisors, it is very natural to write
\[
  \Delta
  =
  \sum_{i\in I}
  (1-\myfrac{1}{m_{i}}) \Delta_{i},
\]
with \textsl{multiplicities} \(m_{i}=a_i/b_i\) in \(\Q_{\geq1}\cup\Set{+\infty}\).
If \(b_{i}=0\), by convention \(a_{i}=1\).
The multiplicity \(1\) corresponds to empty boundary divisors. 
The multiplicity \(+\infty\) corresponds to reduced boundary divisors.
We denote \(\abs{\Delta}\bydef\sum_{i\in I}\Delta_i\) (it could be slightly larger than the support of \(\Delta\) because of possible multiplicities \(1\)).

Such pairs \((X,\Delta)\) are studied using their \textsl{orbifold cotangent bundles} (\cite{CP15}).
Following the presentation used notably in \cite{C15},  it is natural to define these bundles on certain Galois coverings, the ramification of which is partially supported on \(\Delta\).
A Galois covering \(\pi\colon Y\to X\) from a smooth projective (connected) variety $Y$ will be termed \textsl{adapted} for the pair \((X,\Delta)\) if
\begin{itemize}
  \item
    for any component \(\Delta_{i}\) of \(\abs{\Delta}\), \(\pi^{\ast}\Delta_{i}=p_{i}D_{i}\), where \(p_{i}\) is an integer multiple of \(a_{i}\) and \(D_{i}\) is a simple normal crossing divisor;
  \item
    the support of \(\pi^{\ast}\Delta+\mathrm{Ram}(\pi)\) has only normal crossings, and the support of the branch locus of \(\pi\) has only normal crossings.
\end{itemize}
There always exists such an adapted covering (\cite[Prop. 4.1.12]{Laz}).

Let \(\pi\colon Y\to X\) be a \(\Delta\)-adapted covering. For any point \(y\in Y\), there exists an open neighbourhood \(U\ni y\) invariant under the isotropy group of \(y\) in \(\Aut(\pi)\), equipped with centered coordinates \(w_{i}\) such that \(\pi(U)\) has coordinates \(z_{i}\) centered in \(\pi(y)\) and
\[
  \pi(w_{1},\dotsc,w_{n})
  =
  (z_{1}^{p_{1}},
  \dotsc,
  z_{n}^{p_{n}}),
\]
where \(p_{i}\) is an integer multiple of the coefficient \(a_{i}\) of \((z_{i}=0)\).
Here by convention, if \(z_{i}\) is not involved in the local definition of \(\Delta\) then \(a_{i}=b_{i}=1\).

If all multiplicities are infinite (\(\Delta=\abs{\Delta}\)), for any \(\Delta\)-adapted covering \(\pi\colon Y\to X\), we denote
\[
  \Omega(\pi,\Delta)
  \bydef
  \pi^{\ast}\Omega_{X}(\log \Delta).
\]
For arbitrary multiplicities, the \textsl{orbifold cotangent bundle} is defined to be the vector bundle \(\Omega(\pi,\Delta)\) fitting in the following short exact sequence:
\begin{equation}
  \label{eq:orbi_cotangent}
  0
  \to
  \Omega(\pi,\Delta)
  \hookrightarrow
  \Omega(\pi,\abs{\Delta})
  \stackrel{\mathrm{res}}{\longrightarrow}
  \bigoplus_{i\in I\colon m_{i}<\infty}
  \O_{\myfrac{\pi^{\ast}\Delta_{i}}{m_{i}}}
  \to
  0.
\end{equation}
Here the quotient is the composition of the pullback of the residue map
\[
  \pi^{\ast}\mathrm{res}
  \colon
  \pi^{\ast}\Omega_{X}(\log \abs{\Delta})
  \to
  \bigoplus_{i\in I\colon m_{i}<\infty}
  \O_{\pi^{\ast}\Delta_{i}}
\]
with the quotients
\(
\O_{\pi^{\ast}\Delta_{i}}
\twoheadrightarrow
\O_{\myfrac{\pi^{\ast}\Delta_{i}}{m_{i}}}
\)
(\cite[\textit{loc. cit.}]{C15}).

Alternatively, the sheaf of orbifold differential forms adapted to \(\pi\colon Y\to(X,\Delta)\) is the subsheaf
\(
\Omega(\pi,\Delta)
\subseteq
\Omega(\pi,\abs{\Delta})
\)
locally generated (in coordinates as above) by the elements
\[
  w_{i}^{\myfrac{p_{i}}{m_{i}}}
  \pi^{\ast}(\diff z_{i}/z_{i})
  =
  w_{i}^{-p_{i}(1-\myfrac{1}{m_{i}})}
  \pi^{\ast}(\diff z_{i}).
\]
Note that if the multiplicities \(m_i\)'s are integers and if the cover \(\pi\) is \textsl{strictly adapted} (i.e. \(p_i=m_i\)), then \(\Omega(\pi,\Delta)\) identifies with \(\Omega_Y\) via the differential map of \(\pi\).

\subsection{Orbifold positivity}
The direct image of the sheaf of \(\Aut(\pi)\)-invariant sections of \(S^N\Omega(\pi,\Delta)\)
\[
  S^{[N]}\Omega(X,\Delta)
  \bydef
  \pi_{\ast}((S^N\Omega(\pi,\Delta)))^{\Aut(\pi)}
  \subseteq
  S^N\Omega_{X}(\log\abs{\Delta}),
\]
is a subsheaf of logarithmic symmetric differentials which does not depend on the choice of \(\pi\).
Note that in almost all situations \(S^{[N]}\Omega(X,\Delta)\neq S^N\Omega(X,\Delta)\).
The sheaves \(S^{[N]}\Omega(X,\Delta)\) are independently defined and cannot be seen as symmetric powers.
One has merely a morphism \(S^pS^{[N]}\Omega(X,\Delta)\to S^{[pN]}\Omega(X,\Delta)\) given by multiplication.
However, the philosophy in the framework of Campana's orbifolds is to study orbifold pairs through adapted covers, and we will.

We would like to relate positivity properties of the orbifold cotangent bundle with some positivity properties of \(\Omega(\pi,\Delta)\), for some adapted cover \(\pi\). 
The definition for bigness is quite clear. 
\begin{defi}
  We say that \((X,\Delta)\) has a \textsl{big cotangent bundle} if \(\Omega(\pi,\Delta)\) is big for some (hence for all) adapted cover \(\pi\).
  Equivalently, the orbifold cotangent bundle of the pair \((X,\Delta)\) is \textsl{big} if for some/any ample integral divisor \(A\subseteq X\), there exists an integer \(N\) such that
  \(H^0(X,S^{[N]}\Omega(X,\Delta)\otimes A^\vee) \neq \{0\}\).
\end{defi}

To define ampleness, we will use augmented base loci, or rather their natural projections. 
In the spirit of~\cite{MU}, in which augmented base loci of vector bundles are studied, we define the orbifold augmented base locus of the cotangent bundle to the pair \((X,\Delta)\), as follows.
Recall that the base locus of a vector bundle \(E\) is defined in~\cite{MU} as
\[
  \Bs(E)
  \bydef
  \left\{x \in X\middle/H^0(X,E) \to E_x \text{ is not surjective}\right\}.
\]
\begin{defi}
  The \textsl{orbifold augmented base locus} of \(\Omega(X,\Delta)\) is
  \[
    \BB_+(\Omega(X,\Delta))
    \bydef
    \bigcap_{p/q\in \Q}\bigcap_{N>0}
    \Bs(S^{[Nq]}\Omega(X,\Delta)\otimes (A^\vee)^{\otimes Np}),
  \]
  for an integral ample divisor \(A\to X\).
\end{defi}

Before proceeding to the definition, observe first the following.
For an adapted cover \(\pi\colon Y\to (X,\Delta)\), we use the notation \(Y'\bydef\P(\Omega(\pi,\Delta))\to Y\).
\begin{prop}
  \label{prop:pi_inv}
  Over $X\setminus \abs{\Delta}$,
  the image of the augmented base locus $\BB_+(\O_{Y'}(1))$ by the natural projection \(Y'\twoheadrightarrow Y\twoheadrightarrow X\) does not depend on \(\pi\). Indeed, it actually coincides with the orbifold augmented base locus \(\BB_+(\Omega(X,\Delta))\vert_{X\setminus\abs\Delta}\).
\end{prop}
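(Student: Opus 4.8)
The plan is to prove the sharper assertion that $\bar\pi\bigl(\BB_+(\O_{Y'}(1))\bigr)\cap U=\BB_+(\Omega(X,\Delta))\cap U$, where $U\bydef X\setminus\abs{\Delta}$ and $\bar\pi\colon Y'\to Y\to X$ is the composition (with $\sigma\colon Y'\to Y$ the bundle projection); since the right‑hand side makes no reference to $\pi$, this yields the proposition. We may take $\pi$ unramified over $U$, as are the covers of \cite[Prop.~4.1.12]{Laz}. Over $U$ one has $\pi^{\ast}\Delta=0$, hence $\Omega(\pi,\Delta)\rvert_{\pi^{-1}(U)}=\pi^{\ast}(\Omega_{X}\rvert_{U})$ and $\bar\pi^{-1}(U)\cong\pi^{-1}(U)\times_{U}\P(\Omega_{X}\rvert_{U})$ with $\O_{Y'}(1)$ the pullback of the Serre bundle; in this model $G\bydef\Aut(\pi)$ acts freely on the first factor and trivially on the second, so $\BB_+(\O_{Y'}(1))\cap\bar\pi^{-1}(U)=\pi^{-1}(U)\times_{U}Z$ for a closed $Z\subseteq\P(\Omega_{X}\rvert_{U})$. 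This already shows $\bar\pi(\BB_+(\O_{Y'}(1)))\cap U$ is determined by $Z$ alone; what remains is to recognize it as the orbifold augmented base locus.

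The bridge between the two sides is the identity, valid for any line bundle $L$ on $X$ and any $N$,
\[
  H^{0}\bigl(Y',\O_{Y'}(N)\otimes\bar\pi^{\ast}L\bigr)^{G}=H^{0}\bigl(Y,S^{N}\Omega(\pi,\Delta)\otimes\pi^{\ast}L\bigr)^{G}=H^{0}\bigl(X,S^{[N]}\Omega(X,\Delta)\otimes L\bigr),
\]
which follows from the projection formula, the exactness of $G$-invariants in characteristic zero, and $(\pi_{\ast}\O_{Y})^{G}=\O_{X}$; it is compatible with the multiplication maps $S^{p}S^{[N]}\Omega(X,\Delta)\to S^{[pN]}\Omega(X,\Delta)$ and, over $U$, with evaluation at points, where moreover $S^{[N]}\Omega(X,\Delta)=S^{N}\Omega_{X}$ and these multiplication maps are surjective. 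Thus the family defining $\BB_+(\Omega(X,\Delta))$ is exactly the $\Aut(\pi)$-invariant part of the family of twists of $\O_{Y'}(1)$. I will also use the description of \cite{MU} of the augmented base locus of a vector bundle $E$ — namely, $w\notin\BB_+(E)$ if and only if for some twist the image of the global sections in the fibre over $w$ is base‑point‑free in $\P(E_{w})$ — together with the standard facts (see \cite{Laz}) that $\BB_+$ is the base locus of a single sufficiently positive twist and that $\Bs(S^{kN}E\otimes(A^{\vee})^{kM})\subseteq\Bs(S^{N}E\otimes(A^{\vee})^{M})$, the latter because $S^{k}S^{N}\twoheadrightarrow S^{kN}$ and global generation passes to symmetric powers and quotients.

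For the inclusion $\bar\pi(\BB_+(\O_{Y'}(1)))\cap U\subseteq\BB_+(\Omega(X,\Delta))\cap U$: if $x\in U\setminus\BB_+(\Omega(X,\Delta))$, then $S^{[N]}\Omega(X,\Delta)\otimes(A^{\vee})^{M}$ is globally generated at $x$ for some $(N,M)$, hence, by the surjectivity of the multiplication maps over $U$, at every $(kN,kM)$, so the image of $H^{0}(X,S^{[kN]}\Omega(X,\Delta)\otimes(A^{\vee})^{kM})$ fills the fibre at $x$. A fortiori $H^{0}(Y',\O_{Y'}(kN)\otimes\bar\pi^{\ast}(A^{\vee})^{kM})$ restricts onto the full space $H^{0}(\sigma^{-1}(y_{0}),\O_{Y'}(kN))$ for a chosen $y_{0}\in\pi^{-1}(x)$, and, $G$ permuting $\pi^{-1}(x)$ transitively, for every point of $\pi^{-1}(x)$; thus $\Bs(\O_{Y'}(kN)\otimes\bar\pi^{\ast}(A^{\vee})^{kM})$ avoids $\bar\pi^{-1}(x)$, and since this base locus contains $\BB_+(\O_{Y'}(1))$ we get $x\notin\bar\pi(\BB_+(\O_{Y'}(1)))$.

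The reverse inclusion is the step I expect to be the main obstacle. Suppose $x\in U\setminus\bar\pi(\BB_+(\O_{Y'}(1)))$; then $\BB_+(\O_{Y'}(1))$ misses the compact fibre $\bar\pi^{-1}(x)$, so there is a single twist $(N_{0},M_{0})$ for which $V\bydef H^{0}(Y',\O_{Y'}(N_{0})\otimes\bar\pi^{\ast}(A^{\vee})^{M_{0}})$ is base‑point‑free along $\bar\pi^{-1}(x)$. For $v\in V$, the product $s_{v}\bydef\prod_{g\in G}g\cdot v$ is a $G$-invariant section of $\O_{Y'}(\abs{G}N_{0})\otimes\bar\pi^{\ast}(A^{\vee})^{\abs{G}M_{0}}$ that is nonzero at a point $\eta$ exactly when $v$ is nonzero on the finite orbit $G\cdot\eta$; since a general $v\in V$ avoids the finitely many proper linear conditions imposed by any fixed finite subset of $\bar\pi^{-1}(x)$, for every $\eta\in\bar\pi^{-1}(x)$ some $s_{v}$ is nonzero at $\eta$. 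Hence the $G$-invariant part of $H^{0}(Y',\O_{Y'}(\abs{G}N_{0})\otimes\bar\pi^{\ast}(A^{\vee})^{\abs{G}M_{0}})$ is base‑point‑free along $\bar\pi^{-1}(x)$, which, by the bridge of the second paragraph, says that the image of $H^{0}(X,S^{[\abs{G}N_{0}]}\Omega(X,\Delta)\otimes(A^{\vee})^{\abs{G}M_{0}})$ cuts out a base‑point‑free subsystem in the projectivised fibre $\P(\Omega_{X}\rvert_{x})$; feeding this into the description of \cite{MU}, applied over $U$ where $S^{[\bullet]}\Omega(X,\Delta)$ is locally free and where the multiplication maps $S^{p}S^{[N]}\to S^{[pN]}$ supply the graded‑ring structure needed to promote base‑point‑freeness at one twist to global generation at a higher one, we conclude $x\notin\BB_+(\Omega(X,\Delta))$. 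The delicate points are concentrated in this last paragraph — the transfer of the vector‑bundle theory of \cite{MU} to the non‑locally‑free family $S^{[N]}\Omega(X,\Delta)$ over the locus where it is locally free, and the reductions to single twists via monotonicity of stable base loci and Noetherianity — whereas the symmetrisation $v\mapsto\prod_{g}g\cdot v$ is the elementary device converting statements about all sections on $Y'$ into statements about $\Aut(\pi)$-invariant ones, i.e. about $(X,\Delta)$ itself.
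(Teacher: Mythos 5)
Your proposal is correct and follows essentially the same route as the paper: you reduce the computation of $\BB_+(\O_{Y'}(1))$ to $\Aut(\pi)$-invariant sections via the product-of-Galois-conjugates symmetrization (plus Noetherianity to work with a single twist), and then translate invariant sections into sections of $S^{[N]}\Omega(X,\Delta)$ through the projection formula, concluding by the description of augmented base loci of vector bundles as in \cite{MU}. The only small discrepancy is your claim that $\pi$ may be taken unramified over $X\setminus\abs{\Delta}$ (adapted covers generally carry extra branching there), but your argument does not actually use this, and the point the paper makes explicitly — that by relative ampleness of $\O_{Y'}(1)$ one may compute $\BB_+$ using pullbacks of ample bundles from the base — is implicitly needed in your third paragraph as well.
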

\begin{proof}
  \begin{enumerate}
    \item
      We claim that for any adapted cover \(\pi\), in order to compute the augmented base locus of \(\Omega(\pi,\Delta)\), it is sufficient to consider \(\Aut(\pi)\)-invariant sections. 

      Notice first that because of the relative ampleness of $\O_{Y'}(1)$, one can assume that the ample line bundle in the definition of $\BB_+(\O_{Y'}(1))$ is the pull-back of an ample line bundle on $X$, and in particular is  invariant under  \(\Aut(\pi)\). 
      Then observe that $\BB_+(\O_{Y'}(1))$ is \(\Aut(\pi)\)-invariant.
      Indeed, for any global section \(\sigma\) and for any element of the Galois group \(\alpha\), the Galois transform \(\sigma^\alpha\) is also a global section.
      We deduce that for each orbit of \(\Aut(\pi)\), either all points are in the augmented base locus, or none. 

      Let $\BB^G_+(\O_{Y'}(1))$ denote the base locus obtained by considering only \(\Aut(\pi)\)-invariant sections. 
      If $v \in \BB_+(\O_{Y'}(1))$, then obviously $v\in\BB^G_+(\O_{Y'}(1))$.
      Conversely, consider $v \not \in \BB_+(\O_{Y'}(1))$. By the preceding considerations, the (finite) orbit of $v$ stays outside $\BB_+(\O_{Y'}(1))$. 
      By Noetherianity, $\BB_+(\O_{Y'}(1))$ can be realized as a single base locus.
      One can then find a divisor in the associated linear system that avoids all the points in the orbit of \(v\).
      In other words, one can find a global section \(\sigma\) which does not vanish at any point of the orbit of $v$. Moreover, this section can be made invariant after multiplication by its Galois conjugates. To conclude, $\BB_+(\O_{Y'}(1)) = \BB^G_+(\O_{Y'}(1))$.
    \item
      Now remark that there is a natural morphism $\pi^*S^{[N]}\Omega(X,\Delta) \to S^N (\Omega(\pi,\Delta))$ which is an injection of sheaves and an isomorphism outside 
      $\abs{\Delta}$. Combining with the preceding equality of base loci, one obtains that $\BB_+(\O_{Y'}(1))$ has a projection on $X\setminus \abs{\Delta}$ which depends only on the sheaves $S^{[N]}\Omega(X,\Delta)$. Namely (reasoning as in~\cite{MU}), it is the restriction of \(\BB_+(\Omega(X,\Delta))\).
      \qedhere
  \end{enumerate}
\end{proof}

There are many interesting situations where one cannot expect global ampleness of \(\Omega(\pi,\Delta)\) but where bigness is not sufficient for applications (see below). Therefore, we shall introduce an intermediate positivity property.

\begin{defi}
  We say that \((X,\Delta)\) has an \textsl{ample cotangent bundle modulo boundary} if its orbifold augmented base locus is contained in the boundary. 
\end{defi}

  Equivalently, \((X,\Delta)\) has an ample cotangent bundle modulo boundary, if for some (hence for all) adapted cover \(\pi\), the orbifold cotangent bundle \(\Omega(\pi,\Delta)\) is ample modulo the \(\Aut(\pi\))-invariant closed subset living over the boundary. This definition will be used in practice.

\begin{rema}
  As a consequence of Proposition~\ref{prop:pi_inv}, the ``ampleness modulo boundary'' of \(\Omega(\pi,\Delta)\) does not depend on \(\pi\).
  Ampleness of orbifold cotangent bundles has been recently studied in the PhD thesis of Tanuj Gomez where it is shown by a different method that for strictly adapted covers ramifying exactly on \(\abs\Delta\), the (global) ampleness of \(\Omega(\pi,\Delta)\) does not depend on the cover.
  It would be interesting to check to which extent ampleness of \(\Omega(\pi,\Delta)\) is equivalent to the triviality of \(H^q(X,S^{[N]}\Omega(X,\Delta)\otimes A^{\otimes p})\), for some \(A\) ample, any \(p,q>0\), and \(N\) large enough.
\end{rema}

\begin{rema}
  In general, one cannot expect that there exists a strictly adapted covering ramifying exactly over the boundary divisor. But if \(\pi\colon Y\to(X,\Delta)\) is a strictly adapted cover ramifying exactly over \(\Delta\), a convenient way to prove that the orbifold cotangent bundle \(\Omega(X,\Delta)\) is ample modulo boundary is to prove that the orbifold cotangent bundle \(\Omega(\pi,\Delta)\simeq\Omega_Y\) is ample modulo its ramification locus and it is actually equivalent.
\end{rema}

\subsection{Obstructions to orbifold positivity}
Positivity of cotangent bundles of projective manifolds or log-cotangent bundles of pairs has been investigated by many authors (see \textit{e.g.} \cite{Deb05,Xie18,BDa18,Den20,CR20,Etesse,Nog86,BDe18}). In the orbifold setting, much less seems to be known. Nevertheless, results of \cite{Sommese} can be interpreted as the study of ampleness of orbifold cotangent bundles associated to orbifolds $(\P^2, \Delta)$ corresponding to arrangements of lines in $\P^2$. In particular, \cite[Theo.~4.1]{Sommese} characterizes exactly which arrangements have ample orbifold cotangent bundles. An interesting consequence of this result is that when the orbifold $(\P^2, \Delta)$ is smooth (i.e. when the lines are in general position), the orbifold cotangent bundle is \emph{never} ample. 
This is due to the following fact. Let $C$ be any irreducible component of $\pi^{-1}(\abs{\Delta})$ then ${\Omega_Y}_{|C} \cong \Omega_C \oplus N_C^*$ (\cite[p.~217]{Sommese}), and $\deg N_C^*=-C^2 \leq 0$.
In other words, each component of the boundary carries a negative quotient.

This can be generalized as follows.
\begin{lemm}
  Let \((\P^n,\Delta)\) be a smooth orbifold pair with integer (or infinite) coefficients. Then, for any strictly adapted covering \(\pi\) the cotangent bundle \(\Omega(\pi,\Delta)\) has negative quotients supported on each boundary component with finite multiplicity, and trivial quotients supported on each boundary component with infinite multiplicity.
\end{lemm}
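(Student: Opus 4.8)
The plan is to reduce to the case of a single boundary component by working locally near a component $\Delta_i$ of $|\Delta|$, and to exhibit the relevant quotient via the residue map. Fix a strictly adapted covering $\pi\colon Y\to(\P^n,\Delta)$, so that $p_i=m_i$ (when $m_i<\infty$) and, as noted in the excerpt, $\Omega(\pi,\Delta)$ identifies with $\Omega_Y$ via $d\pi$ when all multiplicities are finite integers; for components with $m_i=\infty$ one works instead with $\Omega(\pi,|\Delta|)=\pi^\ast\Omega_{\P^n}(\log|\Delta|)$. Let $D_i$ be a component of $\pi^{-1}(\Delta_i)$, so $\pi^\ast\Delta_i=p_iD_i+(\text{other components})$ locally, and $D_i$ is smooth away from the other components. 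I would first handle the infinite-multiplicity case, which is already essentially in the literature: along $D_i$ the logarithmic residue gives a surjection $\pi^\ast\Omega_{\P^n}(\log|\Delta|)\twoheadrightarrow \O_{D_i}$, i.e. a trivial quotient line bundle supported on $D_i$, which is exactly the restriction to $D_i$ of the residue exact sequence pulled back by $\pi$. This is the content of the ``trivial quotients'' assertion.

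For a component $\Delta_i$ with finite integer multiplicity $m_i$, I would argue as follows. Restrict the defining short exact sequence~\eqref{eq:orbi_cotangent} to $D_i$, or better, work directly in the local model $\pi(w_1,\dots,w_n)=(w_1^{p_1},\dots,w_n^{p_n})$ with $D_i=\{w_i=0\}$ and $m_i=p_i$. There $\Omega(\pi,\Delta)$ is locally free on the generators $w_j^{p_j/m_j}\pi^\ast(dz_j/z_j)$; the generator attached to the index $i$ is $w_i^{0}\cdot\pi^\ast(dz_i/z_i)=$ a unit times $dw_i/w_i$, wait — more precisely, since $p_i=m_i$, the generator is $\pi^\ast(dz_i/z_i)=p_i\,dw_i/w_i$, which has a logarithmic pole; but for the other indices $j\neq i$ with $m_j$ finite one takes $w_j^{p_j/m_j}\pi^\ast(dz_j/z_j)$, and restricting to $\{w_i=0\}$ one gets a splitting $\Omega(\pi,\Delta)|_{D_i}\cong \Omega(\pi',\Delta')|_{D_i}\oplus L_i$, where $\Omega(\pi',\Delta')$ is the analogous orbifold cotangent bundle of the induced pair on $D_i$ and $L_i$ is the line bundle locally generated by the $w_i$-conormal direction. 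The point is then to identify $L_i$: since $D_i^2$ appears (as in the Sommese computation, $\Omega_Y|_C\cong\Omega_C\oplus N_C^\ast$ with $\deg N_C^\ast=-C^2$), one gets $L_i\cong N_{D_i/Y}^\ast$ up to the twist coming from the adaptation, and I would compute its degree on a curve inside $D_i$ to see it is negative. Concretely, $D_i$ is (a component of) the preimage of a hyperplane and $Y$ is a (Fermat-type) cyclic cover, so $D_i$ has positive self-intersection along the relevant directions, forcing $\deg N_{D_i/Y}^\ast<0$; the $m_i$-twist does not change the sign since it only scales the conormal direction by a power of $w_i$ which vanishes on $D_i$ and hence contributes a positive multiple of $D_i$ to the divisor class of $L_i^\vee$. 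This gives the negative quotient.

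So the sequence of steps is: (1) fix a strictly adapted cover and reduce to a neighbourhood of a single component $\Delta_i$ using the local normal form; (2) for $m_i=\infty$, read off the trivial quotient $\O_{D_i}$ from the pulled-back logarithmic residue sequence; (3) for $m_i$ finite, restrict $\Omega(\pi,\Delta)$ to $D_i$ and split off the conormal line bundle $L_i$ of $D_i$ inside $Y$ (up to adaptation twist); (4) compute $\deg L_i$ along a suitable curve in $D_i$, using that $D_i$ is a component of the preimage of a hyperplane under a cyclic/Fermat cover and hence has nonnegative, in fact positive, self-intersection, to conclude that $L_i$ is a negative quotient line bundle. The main obstacle I anticipate is step (3)–(4): one must be careful that the splitting $\Omega(\pi,\Delta)|_{D_i}\cong(\text{something})\oplus L_i$ really holds as sheaves (it is only a local-model splitting a priori, and globally one gets a quotient, which is all that is needed), and one must correctly bookkeep the orbifold twist by $w_i^{p_i/m_i}$ to check that it does not spoil the negativity of the conormal bundle — here the key is precisely that $p_i/m_i\geq 0$ so the twist moves the class in the ``more negative'' direction after dualizing, or is zero in the strictly adapted case $p_i=m_i$ where $\Omega(\pi,\Delta)\cong\Omega_Y$ and one is back to the clean Sommese-type computation $\deg N_{D_i}^\ast=-D_i^2<0$.
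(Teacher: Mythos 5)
Your treatment of the components with infinite multiplicity is correct and is the same as the paper's: since the local generator of \(\Omega(\pi,\Delta)\) in the direction transverse to such a component agrees with that of \(\Omega(\pi,\abs{\Delta})\), the pulled-back logarithmic residue map restricts to a surjection of \(\Omega(\pi,\Delta)\) onto the trivial sheaf supported on \(\pi^{\ast}\Delta_i\), and that is all there is to say.

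For the components with finite multiplicity, however, there is a genuine gap, and it sits exactly at the point you flag in steps (3)--(4) and then set aside. The conormal exact sequence \(0\to N_{D_i}^{\ast}\to\Omega_Y\rvert_{D_i}\to\Omega_{D_i}\to0\) exhibits the conormal bundle as a \emph{sub}sheaf, not a quotient; every short exact sequence of locally free sheaves splits locally, so the ``local-model splitting'' carries no global information, and the assertion that ``globally one gets a quotient'' is precisely the content of the lemma rather than something that comes for free (for a general smooth divisor \(D\subset Y\) the conormal sequence does not split and a surjection \(\Omega_Y\rvert_{D}\twoheadrightarrow N_{D}^{\ast}\) need not exist). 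What makes it true here is that \(D_i\) is a ramification component of the adapted cover, and you must use that. One repair along your lines is equivariance: the isotropy group of a general point of \(D_i\) inside \(\Aut(\pi)\) acts trivially on \(\Omega_{D_i}\) and by a nontrivial character on \(N_{D_i}^{\ast}\), so the equivariant conormal sequence splits into isotypic pieces --- this is exactly the Sommese splitting you cite for surfaces. The paper instead stays entirely inside the residue formalism: writing \(\Delta=(1-1/m_i)\Delta_i+\Delta'\) and comparing the defining sequence~\eqref{eq:orbi_cotangent} for \(\Delta\) with the one for \(\Delta'\) produces a globally defined surjection of \(\Omega(\pi,\Delta)\) onto \(\mathcal{I}/\mathcal{I}^{m_i}\), where \(\mathcal{I}\) is the ideal of the reduced divisor \(D_i=\pi^{\ast}\Delta_i/m_i\), and composing with \(\mathcal{I}/\mathcal{I}^{m_i}\twoheadrightarrow\mathcal{I}/\mathcal{I}^{2}\simeq N_{D_i}^{\ast}\) gives the negative quotient with no splitting argument at all. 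Your step (4) (negativity because \(D_i\) is a rational multiple of the pullback of an ample hypersurface, and the orbifold twist is trivial since \(p_i=m_i\)) is fine once the quotient exists, but note that the lemma concerns arbitrary smooth orbifold pairs on \(\P^n\) and arbitrary strictly adapted covers, so the argument should not lean on \(Y\) being a Fermat-type cyclic cover.
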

\begin{proof}
  Let \(\Delta=(1-1/m_1)\Delta_1+\Delta'\), where the multiplicity of \(\Delta_1\) in \(\Delta'\) is zero.

  If \(m_1=\infty\), the residue exact sequence
  \(
  \Omega(\pi,\abs{\Delta'})
  \hookrightarrow
  \Omega(\pi,\abs{\Delta})
  \twoheadrightarrow
  \O_{\pi^\ast\Delta_1}
  \)
  restricts to
  \[
    \Omega(\pi,\Delta')
    \hookrightarrow
    \Omega(\pi,\Delta)
    \twoheadrightarrow
    \O_{\pi^\ast\Delta_1}.
  \]
  We get the sought trivial quotient on \(\abs{\pi^\ast\Delta_1}\)

  If \(m_1<\infty\), let \(D_1\bydef\myfrac{\pi^{\ast}\Delta_{1}}{m_{1}}\). Note that this is a reduced divisor. By~\eqref{eq:orbi_cotangent}, one has:
  \[
    \Omega(\pi,\Delta)
    \hookrightarrow
    \Omega(\pi,\abs{\Delta})
    \twoheadrightarrow
    \O_{D_1}
    \oplus
    \bigoplus_{i\in I\colon m_{i}<\infty}
    \O_{\myfrac{\pi^{\ast}\Delta_{i}'}{m_{i}}},
  \]
  and
  \[
    \Omega(\pi,\Delta')
    \hookrightarrow
    \Omega(\pi,\abs{\Delta})
    \twoheadrightarrow
    \O_{m_1D_1}
    \oplus
    \bigoplus_{i\in I\colon m_{i}<\infty}
    \O_{\myfrac{\pi^{\ast}\Delta_{i}'}{m_{i}}}.
  \]
  One infers
  \[
    \Omega(\pi,\Delta')
    \hookrightarrow
    \Omega(\pi,\Delta)
    \twoheadrightarrow
    \O_{m_1D_1}
    \diagup
    \O_{D_1}.
  \]
  Let \(\mathcal{I}\) denote the ideal sheaf of \(D_1\) in \(Y\).
  The quotient above is isomorphic to \(\mathcal{I}\diagup\mathcal{I}^{m_1}\). Composing with the quotient \(\mathcal{I}/\mathcal{I}^{m_1}\twoheadrightarrow \mathcal{I}/\mathcal{I}^2\simeq \mathcal N_{D_1}^*\), we deduce that \(\Omega(\pi,\Delta)\) has a negative quotient supported on \(\abs{D_1}\) (and namely the conormal bundle of \(D_1\)).
\end{proof}

Therefore, starting with smooth orbifold pairs associated to hyperplane arrangements in projective spaces, the best one can hope for is ampleness modulo the boundary.

\section{Ampleness modulo boundary of the orbifold cotangent bundle}
\label{se:orbinog}
This section is devoted to prove the following extension of Theorem~\ref{theo:lognog}.
\begin{alphtheo}
  \label{theo:orbinog}
  The orbifold cotangent bundle along an arrangement \(\mathscr{A}\) of \(d\geq \binom{n+2}{2}\) hyperplanes in \(\P^{n}\) in general position with respect to hyperplanes and to quadrics, with multiplicities \(m\geq 2n+2\), is ample modulo \(\mathscr{A}\).
\end{alphtheo}
We keep the setting and notation of Sect.~\ref{se:lognog}.
\begin{rema}
  One can accept different multiplicities for the hyperplanes.
  Indeed, lowering all multiplicities to the lowest one (still assumed at least \(2n+2\)), one fits in the setting of Theorem~\ref{theo:orbinog}. But the augmented base locus of the orbifold cotangent sheaf can only increase by doing this operation. (Note that the projectivized orbifold cotangent bundles are isomorphic outside of the boundary). Applying the same reasoning, one can also treat infinite multiplicities.
\end{rema}

\subsection{Fermat covers}
\label{sse:FC}
Considering the \(k\) linear relations between the hyperplanes:
\[
  H_{n+j}=a_0^jH_0+\dotsb+a_n^jH_n,
\]
we identify the projective space \(\P^n\) with the linear subspace of \(\P^N\bydef\P^{n+k}\) cut out by the \(k\) linear equations
\[
  Z_{n+j}=a_0^jZ_0+\dotsb+a_n^jZ_n.
\]
in homogeneous coordinates \(Z_0,\dotsc,Z_N\), for \(N\bydef n+k\).
We also define the complete intersection \(Y\) in \(\P^N\) of the \(k\) Fermat hypersurfaces
\[
  Z_{n+j}^m=a_0^jZ_0^m+\dotsb+a_n^jZ_n^m.
\]
The map \(\pi\colon[Z_i]\mapsto[Z_i^m]\) realizes \(Y\) as a cover of \(\P^n\) ramifying exactly over the hyperplanes \(H_i\), with multiplicity \(m\). In other words, \(Y\) is a (strictly) adapted cover of the orbifold pair \((\P^n,\Delta)\), where \(\Delta=(1-1/m)(H_0+\dotsb+H_{N})\). We call \(\pi\colon Y\to(\P^n,\Delta)\) the \textsl{Fermat cover} of \((\P^n,\Delta)\).

The cotangent bundle of the orbifold pair \((\P^n,\Delta)\) is ample modulo boundary
when the cotangent bundle of its Fermat cover is ample modulo its ramification locus.

An obvious obstruction to ampleness of the cotangent bundle is the presence of rational lines.
The following remark gives another nice justification that we need to take at least \(2n+1\) hyperplanes in order to hope for the orbifold cotangent bundle to be ample.
\begin{rema}
  \label{rema:standardlines}
  Recall that each Fermat hypersurface of degree \(m\) (without zero coefficient) in \(\P^{n+1}\) contains a \(n-2\) dimensional family of ``standard'' lines. The standard lines can be described as follows. To each partition of the set \(\{0,\dotsc, n + 1\}\) in \(r\) subsets with cardinalities \(i_1 ,\dotsc, i_r \geq 2\), there is a rational map \(\P^{n+1}\dashrightarrow\P^{i_1-1}\times\dotsb\times \P^{i_r-1}\), the fibers of which are linear subspaces \(\P^{r-1}\) . Its restriction to the Fermat hypersurface yields a rational map onto a product of lower dimensional Fermat hypersurfaces (of total dimension \(n+2-2r\)). Each fiber of this map contains a \((2r - 4)\)-dimensional family of lines, which are called standard.
\end{rema}
\begin{lemm}
  There is no standard line in a generic complete intersection of \(k\) Fermat hypersurfaces in \(\P^{n+k}\) iff \(k\geq n\).
\end{lemm}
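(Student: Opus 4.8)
The plan is to analyse standard lines dimension by dimension, using the combinatorial description recalled in Remark~\ref{rema:standardlines}, and then impose the $k$ generic Fermat equations one at a time. A standard line $\ell$ in $\P^{n+k}$ lives in a fiber of some partition-type projection, and such a fiber is a linear $\P^{r-1}$; for $\ell$ to exist we need $r\geq 2$, and the family of standard lines attached to a fixed fiber has dimension $2r-4$. First I would stratify the (closure of the) locus of standard lines in the Grassmannian $\mathbb{G}(1,n+k)$ according to the partition type $(i_1,\dots,i_r)$ with $\sum i_s = n+k+1$ and each $i_s\geq 2$. For a fixed type, the parameter space is: choice of the partition (finite), choice of the fiber $\P^{r-1}$ inside $\P^{n+k}$ (this is $\prod_s \P^{i_s-1}$, of dimension $\sum_s(i_s-1) = n+k+1-r$), and choice of the line inside that $\P^{r-1}$ ($2r-4$). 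So each stratum of standard lines has dimension
\[
  (n+k+1-r) + (2r-4) = n+k+r-3.
\]
This is maximised when $r$ is as large as possible, i.e. $r = \lfloor (n+k+1)/2\rfloor$, but for the counting argument it is cleanest to keep $r$ as a parameter.

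Next I would track what happens when we intersect with the $k$ Fermat hypersurfaces $Z_{n+j}^m = a_0^j Z_0^m + \dots + a_n^j Z_n^m$. The key point is that, for a generic choice of the coefficients $a_i^j$, each such hypersurface is \emph{not} identically zero on any positive-dimensional stratum of standard lines: restricted to a fixed fiber $\P^{r-1}$ the equation becomes a nontrivial degree-$m$ relation (this uses that the fiber meets enough of the coordinate hyperplanes, equivalently that the partition has more than one block — and this is exactly where the asymmetry between $k\geq n$ and $k<n$ will surface), so it cuts the incidence variety of (line, point on line) down by one, and hence the locus of standard lines contained in it drops by (at least) one in dimension — here I would run the standard incidence-correspondence / genericity argument (à la Bertini applied to the universal line over each stratum). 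Imposing all $k$ equations generically, the dimension of the locus of standard lines on the complete intersection $Y$ is therefore at most
\[
  (n+k+r-3) - k = n+r-3,
\]
and since $\ell$ must in addition \emph{lie on} $Y$ we also need the $k$ equations to vanish on $\ell$ identically — the genericity forces the expected codimension drop. Running this for the extremal value of $r$ gives a bound of the form $n + \lfloor(n+k+1)/2\rfloor - 3 - (\text{correction})$; the statement $k\geq n$ should come out precisely as the threshold at which this number becomes negative (for $k\geq n$) versus nonnegative (for $k<n$, where one exhibits an explicit surviving standard line). I would do the two inequalities separately: for the ``only if'' direction, when $k<n$ produce an explicit standard line — choose the partition with one block of size $n+k-1$... wait, rather: split the $n+k+1$ coordinates so that each generic Fermat equation is forced to vanish, which is possible exactly when $k<n$ — and verify it lies on $Y$; for the ``if'' direction run the dimension count above and conclude the generic $Y$ contains no standard line.

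The main obstacle I anticipate is bookkeeping the genericity step cleanly: one must argue that for a \emph{generic} (not just very general) choice of coefficients $a_i^j$ each Fermat equation imposes an independent condition on each stratum of the standard-line locus, rather than vanishing on some unexpected component. The clean way is to set up a single incidence variety over the space of coefficient tuples $(a_i^j)$ — whose fiber over a tuple is $\{(\ell, \text{standard})\colon \ell\subset Y_{(a_i^j)}\}$ — compute its dimension by projecting the \emph{other} way (onto the Grassmannian stratum, where the fiber is a linear system of conditions on the $a_i^j$ that is easily seen to be of the expected codimension because distinct standard lines impose independent conditions), and then use upper-semicontinuity of fiber dimension to transfer the bound to the generic tuple. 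The subtlety is entirely in checking that the ``linear system of conditions on the $a_i^j$'' really has the expected rank — equivalently that a standard line is not automatically contained in every Fermat complete intersection of its type — and this is exactly the inequality that produces $k\geq n$. Everything else (the dimension counts, the incidence correspondences) is routine.
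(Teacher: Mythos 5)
There is a genuine gap here, on two levels. First, your central dimension count does not close. You bound the stratum of standard lines of partition type $(i_1,\dotsc,i_r)$ by $n+k+r-3$ and then subtract ``at least one per Fermat equation,'' arriving at $n+r-3$. But requiring a line to be \emph{contained} in a degree-$m$ hypersurface is not one condition (it is $m+1$ conditions on the restricted equation), and even taking your weak ``drops by at least one'' at face value, $n+r-3\geq 0$ for every $r\geq 2$ and $n\geq 2$, so the bound never becomes negative and the threshold $k\geq n$ cannot ``come out'' of it. You acknowledge a missing ``correction'' term and explicitly defer the decisive step --- checking that the conditions on the coefficients $a_i^j$ have the expected rank, which you yourself identify as ``exactly the inequality that produces $k\geq n$.'' In other words, the one thing that needs proving is the one thing the proposal does not prove. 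The whole Bertini/incidence-variety apparatus over the space of coefficient tuples is also misdirected: the lemma is not a statement about expected dimensions of generic intersections, but about whether the \emph{standard-line construction itself} can be carried out.

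The intended argument is purely combinatorial and takes two lines. By the construction recalled in the remark on standard lines, a standard line on the complete intersection $Y$ of $k$ Fermat hypersurfaces arises from a partition of $\{0,\dotsc,n+k\}$ into $r\geq 2$ blocks of sizes $i_1,\dotsc,i_r$: the fiber $\P^{r-1}$ of the partition map lies on $Y$ only if, block by block, the $k$ Fermat equations restricted to the corresponding $\P^{i_s-1}$ have a common zero, i.e.\ only if each block supports a nonempty complete intersection of $k$ Fermat hypersurfaces in $\P^{i_s-1}$. For generic coefficients this forces $i_s-1\geq k$, i.e.\ $i_s\geq k+1$ for every block. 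A partition into at least two blocks each of size at least $k+1$ exists iff $n+k+1\geq 2(k+1)$, i.e.\ iff $k<n$; this gives both directions at once (for $k<n$ any such partition produces standard lines, for $k\geq n$ no admissible partition exists). No dimension count, no incidence correspondence, and no genericity beyond ``the restricted systems are honest complete intersections'' is needed.
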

\begin{proof}
  Now, we consider a complete intersection of \(k \geq 2\) Fermat hypersurfaces in \(\P^{n+k}\) with generic coefficients, and we consider only partitions of \(\{0,\dotsc,n+k\}\) in subsets with cardinalities at least \(1 + k\) (otherwise the intersection of the complete intersection with the linear subspace would be generically empty). There is no nontrivial such partition as soon as \(n + 1 + k < 2(1 + k)\), i.e. \(k \geq n\).
\end{proof}

\subsection{Explicit symmetric differentials on Fermat covers}
In \cite{Bro16}, Brotbek has described a way to produce global twisted symmetric differentials on complete intersections \(Y\) in \(\P ^{N}\).
The following is a slight adaptation to the particular setting of Fermat covers of~\cite{Bro16} (see also~\cite{Xie18,Demailly}) ; this could appear not so obvious due to some redaction shortcuts.
We could have made the proof (slightly) more heuristic with an approach involving a \(N\times(N+1)\) matrix in the spirit of \cite{Bro16}, but here we have prefered compactness.

\begin{lemm}
  For any subset of pairwise distinct integers \(\{j_1,\dotsc,j_n\}\) in \(\{n+1,\dotsc,n+k\}\), there is a global section of \(S^{n}\Omega_{Y}(2n+1-m)\) given on \(Z_0\neq 0\) by:
  \[
    \sigma
    \bydef
    \begin{vmatrix}
      a_{1}^{j_1-n}(z_1z_{j_1}'-z_1'z_{j_1})&\dotso&a_{n}^{j_1-n}(z_nz_{j_1}'-z_n'z_{j_1})\\
      \vdots&&\vdots\\
      a_{1}^{j_n-n}(z_1z_{j_n}'-z_1'z_{j_n})&\dotso&a_{n}^{j_n-n}(z_nz_{j_n}'-z_n'z_{j_n})
    \end{vmatrix}
    \otimes
    Z_0^{2n+1-m},
  \]
  where \(z_i\bydef Z_i/Z_0\) denote the standard affine coordinates on the chart \(Z_0\neq0\).
\end{lemm}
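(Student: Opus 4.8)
The plan is to verify directly that the expression $\sigma$ defines a holomorphic section of the stated twisted symmetric power on the affine chart $Z_0\neq 0$, and then to check that the twist by $Z_0^{2n+1-m}$ makes it extend holomorphically across $Z_0=0$. First I would set up notation carefully: each Fermat equation $Z_{n+j}^m = a_0^j Z_0^m+\dotsb+a_n^j Z_n^m$ gives, after dividing by $Z_0^m$ and differentiating, the relation $m z_{n+j}^{m-1}\,\mathrm{d}z_{n+j} = m(a_1^j z_1^{m-1}\,\mathrm{d}z_1+\dotsb+a_n^j z_n^{m-1}\,\mathrm{d}z_n)$ on $Y\cap\{Z_0\neq 0\}$, so the $z_i$ for $1\le i\le n$ are (generically) local coordinates on $Y$ and $\mathrm{d}z_{n+j}$ is a combination of $\mathrm{d}z_1,\dotsc,\mathrm{d}z_n$ with coefficients that are rational functions with denominators powers of $z_{n+j}$. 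I should read $z_i'$ in the statement as shorthand for the differential $\mathrm{d}z_i$, so that each entry $a_i^{j_\ell-n}(z_i z_{j_\ell}' - z_i' z_{j_\ell})$ is a $1$-form and the $n\times n$ determinant, expanded multilinearly, is a symmetric $n$-form (a section of $S^n\Omega_Y$ over the chart). The genuine content is that this local expression, a priori only defined where the $z_i$ are coordinates, is globally a section of $S^n\Omega_Y$ with at worst poles along the hyperplane preimages, controlled enough that multiplying by $Z_0^{2n+1-m}$ clears them.

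The key steps, in order, would be: (1) rewrite each factor $z_i z_{j_\ell}' - z_i' z_{j_\ell}$ using the chain/quotient rule as $z_i^2\,\mathrm{d}(z_{j_\ell}/z_i)$ (or symmetrically), which exhibits it as a logarithmic-type object and makes the homogeneity transparent; more usefully, observe that on the Fermat cover $z_{n+j}^m$ is a linear form in the $z_i^m$, so $z_{n+j}$ is an $m$-th root and the combination $z_i z_{n+j}' - z_i' z_{n+j}$, once one substitutes the expression for $\mathrm{d}z_{n+j}$ from the differentiated Fermat relation, becomes $z_{n+j}^{1-m}$ times a polynomial expression in the $z_i$ and $\mathrm{d}z_i$ for $1\le i\le n$. (2) Substitute this into the determinant: the $\ell$-th row acquires a common factor $z_{j_\ell}^{1-m}$, so $\sigma = (z_{j_1}\dotsm z_{j_n})^{1-m}\cdot(\text{poly symmetric }n\text{-form in }z_1,\dotsc,z_n,\mathrm{d}z_1,\dotsc,\mathrm{d}z_n)$. (3) Track the homogeneity degree of that polynomial symmetric form and of the monomials $z_i = Z_i/Z_0$ in terms of $Z_0$, to conclude that after tensoring with $Z_0^{2n+1-m}$ the resulting object is a well-defined global section of $S^n\Omega_Y\otimes\O_Y(2n+1-m)$ — i.e. it has the claimed weight — with no poles along $Z_0=0$, and by the symmetry of the construction under permuting the coordinate hyperplanes, no poles along any $Z_i = 0$ either. (4) Finally, invoke that a rational section of a vector bundle on a smooth projective variety with no poles is a global regular section; since $Y$ is smooth (generic complete intersection) this finishes it. One can alternatively argue à la Brotbek by interpreting the determinant as a Plücker-type minor of an $n\times(n+1)$ matrix whose entries are the $1$-forms $\mathrm{d}z_i$ and the linear forms $z_i$, and checking the cocycle condition across the charts $Z_k\neq 0$ directly.

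I expect the main obstacle to be the bookkeeping of homogeneity weights: showing that the various $Z_0$-powers picked up when passing from $z_i$ to $Z_i$, from the $z_{j_\ell}^{1-m}$ factors, and from the differentials $\mathrm{d}z_i = Z_0^{-1}\mathrm{d}Z_i - Z_i Z_0^{-2}\mathrm{d}Z_0$ (which themselves are not homogeneous of a clean weight), all conspire to the single clean twist $2n+1-m$, and in particular that the apparent poles along $Z_0=0$ of order up to $m - (2n+1)$ (coming from the $(1-m)$-powers and the chart change, there being $n$ rows) are exactly cancelled. Getting this to work requires being careful that $\mathrm{d}z_i$ contributes a pole of order one along $Z_0=0$ but that the antisymmetric combinations $z_i z_{j}' - z_i' z_{j}$ are better behaved than their individual terms — the $\mathrm{d}Z_0$ parts cancel in each such combination — so that each of the $n$ factors contributes, in homogeneous coordinates, effectively degree $2$ in the $Z$'s and a genuine $1$-form $Z_i\,\mathrm{d}Z_j - Z_j\,\mathrm{d}Z_i$ with no $\mathrm{d}Z_0$. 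Once that cancellation is spotted, the weight count is routine; spotting and cleanly presenting it is the crux. The genericity of the coefficients $a_i^j$ is needed only to ensure $Y$ is smooth and the $z_i$ are coordinates on a dense open set, not for the construction of $\sigma$ itself.
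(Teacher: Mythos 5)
Your framework (chart-by-chart verification, use of the differentiated Fermat relations, reading \(z_i'\) as fibre coordinates on \(\P(\Omega)\)) is the right one, but the argument as proposed does not close: the pole cancellation along \(Z_0=0\) is not established. The observation that the \(\diff Z_0\) parts cancel in each antisymmetric combination only tells you that \(z_iz_j'-z_i'z_j=Z_0^{-2}(Z_iZ_j'-Z_i'Z_j)\), so that the determinant is \(Z_0^{-2n}\) times a polynomial in the homogeneous variables. Combined with the twist \(Z_0^{2n+1-m}\), the weight count leaves a residual factor \(Z_0^{1-m}\), i.e.\ an apparent pole of order \(m-1\) (not \(m-(2n+1)\)) along \(Z_0=0\); since \(m\geq 2n+2\), this is a genuine pole unless the homogeneous determinant, \emph{restricted to \(\P(\Omega_Y)\)}, vanishes to order \(m-1\) along \(Z_0=0\). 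That extra vanishing is the entire content of the lemma, and nothing in your steps (1)--(3) produces it: your substitution \(z_{j_\ell}'=z_{j_\ell}^{1-m}\sum_p a_p^{j_\ell-n}z_p^{m-1}z_p'\) only introduces and then re-cancels apparent poles along the Fermat coordinates \(Z_{j_\ell}=0\), and leaves the \(Z_0\)-pole untouched.

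The missing ingredient is the Cramer-type identity. The Fermat equations and their derivatives say precisely that the \(n\times(n+1)\) matrix with entries \(a_i^{j_\ell-n}(Z_iZ_{j_\ell}'-Z_i'Z_{j_\ell})\), \(i=0,\dotsc,n\), annihilates the column vector \((Z_0^{m-1},\dotsc,Z_n^{m-1})^{T}\) on \(\P(\Omega_Y)\); hence its maximal minors satisfy \(Z_i^{m-1}\det_{\overline{0}}=\pm Z_0^{m-1}\det_{\overline{\imath}}\), which exhibits the required divisibility of \(\det_{\overline 0}\) by \(Z_0^{m-1}\) on each chart \(Z_i\neq0\), \(1\leq i\leq n\). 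Note also that your appeal to ``symmetry under permuting the coordinate hyperplanes'' is not available: the expression singles out column \(0\), and the coordinates \(Z_0,\dotsc,Z_n\) play a different role from \(Z_{n+1},\dotsc,Z_{n+k}\); the charts \(Z_{n+j}\neq0\) must be handled separately by enlarging the matrix with the row \((a_0^{j}Z_0,\dotsc,a_n^{j}Z_n,1)\) and the column corresponding to \(-Z_{n+j}^{m}\), and applying the same Cramer argument there. Your concluding remark on where genericity enters is correct, but without the annihilation identity the ``routine weight count'' does not go through.
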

\begin{proof}
  We would like to underline that the most interesting part of the lemma is the ``extra vanishing'' of order \(m-1\) that we shall now explain (see \cite[Sect. 12D]{Demailly} for an analog construction for higher order jet differentials).

  The proof relies on the following very basic fact of linear algebra.  Consider a \(n\times(n+1)\) matrix such that the sum of the columns is zero, then (up to sign) all its \(n\times n\) minors are equal. This is more or less Cramer's rule.
  Let us denote by \(\det_{\overline c}\) the minor obtained by removing column \(c\) from such a matrix.

  For any \(j=n+1,\dotsc,n+k\), one has:
  \begin{align*}
    a_0^{j-n}+a_1^{j-n}z_1^m+\dotsb+a_n^{j-n} z_n^m= z_{j}^m &&\text{and}&& a_1^{j-n}z_1^{m-1}z_1'+\dotsb+a_n^{j-n} z_n^{m-1}z_n'= z_{j}^{m-1}z_{j}'.
  \end{align*}
  Therefore:
      \[
        \begin{pmatrix}
          a_{0}^{j_1-n}(z_0z_{j_1}'-z_0'z_{j_1})&\dotso&a_{n}^{j_1-n}(z_nz_{j_1}'-z_n'z_{j_1})\\
          \vdots&&\vdots\\
          a_{0}^{j_n-n}(z_0z_{j_n}'-z_0'z_{j_n})&\dotso&a_{n}^{j_n-n}(z_nz_{j_n}'-z_n'z_{j_n})
        \end{pmatrix}
        \begin{pmatrix}
          z_0^{m-1}\\\vdots\\z_n^{m-1}
        \end{pmatrix}
        =
        0.
      \]
  where we denote \(z_0\bydef1\) and \(z_0'\bydef0\) for convenience.
  Observe that one has
  \[
    \sigma
    =
    \det_{\overline 0}
        \begin{pmatrix}
          a_{0}^{j_1-n}(z_0z_{j_1}'-z_0'z_{j_1})&\dotso&a_{n}^{j_1-n}(z_nz_{j_1}'-z_n'z_{j_1})\\
          \vdots&&\vdots\\
          a_{0}^{j_n-n}(z_0z_{j_n}'-z_0'z_{j_n})&\dotso&a_{n}^{j_n-n}(z_nz_{j_n}'-z_n'z_{j_n})
        \end{pmatrix}
    \otimes
    Z_0^{2n+1-m}.
  \]
  One immediately infers some alternative expressions of \(\sigma\) on the intersections \((Z_0Z_i\neq0)\), in which the sought ``extra'' vanishing appear. 
  \begin{enumerate}
    \item
      On \((Z_0Z_1\neq0)\), we can use \(\det_{\overline 1}\) instead of \(\det_{\overline0}\) (the same reasoning holds for \(Z_2,\dotsm,Z_n\)) :
      \[
        \sigma
        =
        -\frac{z_0^{m-1}}{z_1^{m-1}}
        \det_{\overline 1}
        \begin{pmatrix}
          a_{0}^{j_1-n}(z_0z_{j_1}'-z_0'z_{j_1})&\dotso&a_{n}^{j_1-n}(z_nz_{j_1}'-z_n'z_{j_1})\\
          \vdots&&\vdots\\
          a_{0}^{j_n-n}(z_0z_{j_n}'-z_0'z_{j_n})&\dotso&a_{n}^{j_n-n}(z_nz_{j_n}'-z_n'z_{j_n})
        \end{pmatrix}
        \otimes
        Z_0^{2n+1-m}.
      \]
      Let \(y_i=z_i/z_1\) denote the standard affine coordinates on \((Z_1\neq0)\). Recall that \(z_0=1\). We get the following expression for \(\sigma\):
      \[
        \sigma
        =
        -
        \det_{\overline 1}
        \begin{pmatrix}
          a_{0}^{j_1-n}(y_0y_{j_1}'-y_0'y_{j_1})&\dotso&a_{n}^{j_1-n}(y_ny_{j_1}'-y_n'y_{j_1})\\
          \vdots&&\vdots\\
          a_{0}^{j_n-n}(y_0y_{j_n}'-y_0'y_{j_n})&\dotso&a_{n}^{j_n-n}(y_ny_{j_n}'-y_n'y_{j_n})
        \end{pmatrix}
        \otimes
        Z_1^{2n+1-m}.
      \]
      Here we use
      \[
        (z_iz_j'-z_i'z_j)=z_1^2(y_iy_j'-y_i'y_j).
      \]
    \item
      Let us now consider the intersection \((Z_0Z_{n+1}\neq0)\). The same reasoning holds for \(Z_{n+2},\dotsc,Z_{n+k}\).
      Here we have to enlarge the matrix by considering
      \[
        \begin{pmatrix}
          a_0^{1}z_0&\dotso&a_n^{1}z_n&1\\
          a_{0}^{j_1-n}(z_0z_{j_1}'-z_0'z_{j_1})&\dotso&a_{n}^{j_1-n}(z_nz_{j_1}'-z_n'z_{j_1})&0\\
          \vdots&&\vdots&\vdots\\
          a_{0}^{j_n-n}(z_0z_{j_n}'-z_0'z_{j_n})&\dotso&a_{n}^{j_n-n}(z_nz_{j_n}'-z_n'z_{j_n})&0
        \end{pmatrix}
        \begin{pmatrix}
          z_0^{m-1}\\\vdots\\z_n^{m-1}\\-z_{n+1}^{m}
        \end{pmatrix}
        =
        0.
      \]
      One has
  \[
    \sigma
    =
    (-1)^n
    \det_{\overline 0}
        \begin{pmatrix}
          a_0^{1}z_0&\dotso&a_n^{1}z_n&1\\
          a_{0}^{j_1-n}(z_0z_{j_1}'-z_0'z_{j_1})&\dotso&a_{n}^{j_1-n}(z_nz_{j_1}'-z_n'z_{j_1})&0\\
          \vdots&&\vdots&\vdots\\
          a_{0}^{j_n-n}(z_0z_{j_n}'-z_0'z_{j_n})&\dotso&a_{n}^{j_n-n}(z_nz_{j_n}'-z_n'z_{j_n})&0
        \end{pmatrix}
    \otimes
    Z_0^{2n+1-m}.
  \]
  Using \(\det_{\overline{n+1}}\) instead of \(\det_{\overline 0}\), one gets the alternative expression:
      \[
        \sigma
        =
        -
        \frac{z_0^{m-1}}{z_{n+1}^m}
        \det
        \begin{pmatrix}
          a_0^{j_1-n}z_0&\dotso&a_n^{j_1-n}z_n\\
          a_{0}^{j_1-n}(z_0z_{j_1}'-z_0'z_{j_1})&\dotso&a_{n}^{j_1-n}(z_nz_{j_1}'-z_n'z_{j_1})\\
          \vdots&&\vdots\\
          a_{0}^{j_n-n}(z_0z_{j_n}'-z_0'z_{j_n})&\dotso&a_{n}^{j_n-n}(z_nz_{j_n}'-z_n'z_{j_n})
        \end{pmatrix}
        \otimes
        Z_0^{2n+1-m}.
      \]
      Let \(y_i=z_i/z_1\) denote the standard affine coordinates on \((Z_{n+1}\neq0)\). We get:
      \[
        \sigma
        =
        -\det
        \begin{pmatrix}
          a_0^{j_1-n}y_0&\dotso&a_n^{j_1-n}y_n\\
          a_{0}^{j_1-n}(y_0y_{j_1}'-y_0'y_{j_1})&\dotso&a_{n}^{j_1-n}(y_ny_{j_1}'-y_n'y_{j_1})\\
          \vdots&&\vdots\\
          a_{0}^{j_n-n}(y_0y_{j_n}'-y_0'y_{j_n})&\dotso&a_{n}^{j_n-n}(y_ny_{j_n}'-y_n'y_{j_n})
        \end{pmatrix}
        \otimes
        Z_{n+1}^{2n+1-m}.
      \]
  \end{enumerate}

  This ends the proof.
\end{proof}
\begin{rema}
  Note that the zero locus of \(\sigma\) does not depend on \(m\). However, one will need \(m> 2n+1\) to get a global symmetric differential vanishing on an ample divisor.
\end{rema}
\subsection{Augmented base locus}
Let \(V\subset Y\) be the \(\Aut(\pi)\)-invariant open subset living above \(X\setminus\abs\Delta\). In other words \(V=(Z_0\dotsm Z_N\neq 0)\).
\begin{theo}
  When \(m>2n+1\), the projection of the augmented base locus of \(\O_{\P(\Omega_{Y})}(1)\) does not intersect the open \(V\).
\end{theo}
\begin{proof}
  In this proof, we use repeatedly that we work on \(Z_0\dotsm Z_N\neq0\), and we will not necessarily mention it anymore.

  Let us denote:
  \[
    B
    \bydef
    \begin{pmatrix}
      a_{0}^{1}(z_0z_{n+1}'-z_0'z_{n+1})&\dotso&a_{n}^{1}(z_nz_{n+1}'-z_n'z_{n+1})\\
      \vdots&&\vdots\\
      a_{0}^{k}(z_0z_{n+k}'-z_0'z_{n+k})&\dotso&a_{n}^{k}(z_nz_{n+k}'-z_n'z_{n+k})
    \end{pmatrix},
  \]
  where \(z_0,\dots,z_N\) are the standard extrinsic affine coordinates on \((Z_i\neq0)\) (for some \(i\in\{0,\dotsc,n\}\)), and where \(z_0',\dotsc,z_N'\) are the standard extrinsic homogeneous coordinates on \(\P(\Omega_{Y})\subset\P(\Omega_{\P^N})\). By convention \(z_i=1,z_i'=0\).

  \begin{enumerate}
    \item
  The augmented base locus is contained in the locus where \( \rk B < n \).
  Indeed, since the first column is a non-zero linear combination of the last \(n\) columns, it is equivalent to say that the rank of the \(n\) last column is less than \(n\). But by the previous lemma, \(n\times n\)-minors in the last \(n\) columns are global sections of \(S^n\Omega_Y(2n+1-m)\).
  Here, it is also useful to notice that \(\O(1)\) is relatively ample on \(\P(\Omega_Y)\).
  Therefore, one can define the augmented base locus of \(\O(1)\) using the pullback of an ample line bundle on \(Y\). 

\item
  In the spirit of the proof in the logarithmic case, we will write \(B\) as a product involving the matrix \(A_{[2]}\).
  Denote \(b_i^j\) the coefficients of the matrix \(B\). For \(j=1,\dotsc,k\), using the equations of \(\P(\Omega_Y)\), one has:
  \[
    z_{n+j}^{m-1}b_{i_1}^j
    =
    z_{n+j}^{m-1}a_{i_1}^j(z_{i_1} z_{n+j}'-z_{i_1}' z_{n+j})
    =
    \sum_{i_2=0}^n a_{i_1}^ja_{i_2}^j (z_{i_1}z_{i_2}'-z_{i_1}'z_{i_2}).
  \]
  Therefore, 
  \[
    B = \mathrm{diag}(1/z_{n+1}^{m-1},\dotsc,1/z_{n+k}^{m-1})\cdot A_{[2]}^T\cdot W,
  \]
  where \(W\) is a \(\binom{n+1}{2}\times(n+1)\)-matrix,
  row \((i_1,i_2)\) of which is \((z_{i_1}z_{i_2}'-z_{i_1}'z_{i_2})(E_{i_1}-E_{i_2})\) (denoting \(E_0,\dotsc,E_n\) the canonical basis of \(\C^{n+1}\)).
  For example, for \(n=2\):
  \[
    W
    =
    \begin{pmatrix}
      (z_0z_1'-z_0'z_1)&
      (z_1z_0'-z_1'z_0)&
      0
      \\
      (z_0z_2'-z_0'z_2)&
      0&
      (z_2z_0'-z_2'z_0)
      \\
      0&
      (z_1z_2'-z_1'z_2)&
      (z_2z_1'-z_2'z_1)
    \end{pmatrix}.
  \]

  One infers that \(\rk B = \rk A_{[2]}^TW\). Moreover, under the assumption that \(A_{[2]}\) is full row rank (which also means that \(A_{[2]}^T\) is full column rank), one has \(\rk(A_{[2]}^TW)=\rk W\). Hence:
  \[
    \rk B
    =
    \rk W.
  \]

\item
  Now, we claim that \(W\) is of rank at least \(n\), from which one deduces the result of the theorem, by the first two points of the proof.

  Indeed, we will exhibit a non-zero \(n\times n\) minor in \(W\).
  We work with the standard affine coordinates on \((Z_0\neq0)\).
  For shortness we will write \(w_{i_1,i_2}\) for \(z_{i_1}z_{i_2}'-z_{i_1}'z_{i_2}\).
  If \(z_1'=\dotsb=z_n'=0\), using the equations of \(\Omega_{Y}\), one would immediately get that all first derivatives are simultaneously zero, which is not possible. Assume therefore that at least one of these first derivatives is non zero, say \(z_1'=w_{0,1}\).
  For \(i=2,\dotsc,n\), one has:
  \(z_i w_{0,1} = (z_1 w_{0,i}-z_0 w_{1,i})\).
  As a consequence, at least one of \(w_{0,i}\) or \(w_{1,i}\) is non-zero.
  Let us call it \(w_{\star,i}\) for convenience.
  Recall that the rows of \(W\) are indexed by couples \((i_1<i_2)\in\{0,\dotsc,n\}^2\).
  Consider the \(n\times n\) minor made of columns \(1,\dotsc,n\) and of rows \((0,1)\), \((\star,2),\dotsc,(\star,n)\). It is
\[
  \begin{vmatrix}
    w_{1,0}&0&\dotso&\dotso&0\\
    *&w_{2,\star}&\ddots&&\vdots\\
    \vdots&0&\ddots&\ddots&\vdots\\
    \vdots&\vdots&\ddots&\ddots&0\\
    *&0&\dotso&0&w_{n,\star}
  \end{vmatrix}
  =
  (-1)^nw_{0,1}w_{\star,2}\dotsm w_{\star,n}
  \neq
  0.
\]
This proves our claim and therefore ends the proof.\qedhere
\end{enumerate}
\end{proof}

Theorem~\ref{theo:orbinog} is then a plain corollary, because the Fermat cover \(\pi\colon Y\to(\P^n,\Delta)\) is an adapted cover such that \(\Omega(\pi,\Delta)\simeq\Omega_{Y}\).

\begin{rema}
  One could deduce Theorem~\ref{theo:lognog} from the same proof, but we have the feeling that the natural role of the coefficient matrix \(A_{[2]}\) would be less highlighted in this way.
\end{rema}

\begin{rema}
  The proof above may look disappointingly simple, but it is actually the synthesis of very hard computational explorations. The matrix \(A_{[2]}\), brought out by the logarithmic case, is the key of the proof and it was a turning point when we were able to involve it in the proof for \(n=2\). All barriers quickly came down after that. We invite the reader to forget its existence for the fun and to try to find some genericity condition for \(\mathscr{A}\) already in the cases \(n=2\) (or \(n=3\) for the most daring) !
  \end{rema}

\section{Bigness of the orbifold cotangent bundle with low multiplicities}
\label{se:orbibig}
We are not able yet to generalize the strategy of Noguchi to the full orbifold category. Indeed, it seems very difficult to produce explicit global sections for \emph{low multiplicities}, even with a lot of components in the boundary. This is quite surprising in view of Theorem~\ref{theo:orbibig}, that we recall below.
\begin{alphtheo}
  \label{theo:orbibig}
  For \(n\geq 2\), the orbifold cotangent bundle along an arrangement \(\mathscr{A}\) of \(d\geq 2n(\frac{2n}{m-2}+1)\) hyperplanes in \(\P^{n}\) with multiplicity \(m\geq 3\) is big.
\end{alphtheo}

For \(n=2\), it was proved in~\cite{CDR20} that for \(m \geq 2\) the orbifold cotangent bundle \(\Omega(\P^2,\Delta)\) is big if \(d \geq 11\).
Here we generalize this statement to higher dimension for any multiplicity \(m \geq 3\). 

\begin{rema}
  \(m=3\) in any dimension is really difficult. This is illustrated by the following vanishing theorem proven in~\cite{CDR20}.
  If \(D\) is a (reduced) smooth divisor (with an \emph{arbitrary large degree}) in \(\P^{n}\) and if \(m\leq n\), then there is no non-zero global orbifold symmetric differential for the pair \((\P^{n},(1-1/m)D)\).
  Actually, there is even no non-zero global jet differential of any order (higher jet order analogs of symmetric differentials).
\end{rema}

\begin{proof}[Proof ot Theorem~\ref{theo:orbibig}]
  The proof relies on a theorem by Brotbek~\cite{Bro14} improved by Coskun and Riedl~\cite{CR20} on cotangent bundles of complete intersections, and on our use of Fermat covers.

  We keep the setting and notation of previous sections.
  Consider the Fermat cover \(\pi\colon Y\to (\P^n,\Delta)\) where \(\Delta\) is the orbifold divisor \(\Delta\bydef\sum_{i=0}^{n+k}(1-1/m)H_{i}\) on \(\P^{n}\).
  Showing that \(\Omega(\P^{n},\Delta)\) is big is equivalent to showing that \(\Omega(\pi,\Delta)\simeq\Omega_{Y}\) is big.
  In order to prove the bigness of \(\Omega_{Y}\), we apply Theorem~2.7 in~\cite{CR20} which gives that a smooth complete intersection of dimension \(n\) in \(\P^N\) and type \((d_1,\dots, d_c)\), with \(c\geq n\), has big cotangent bundle if
  \[
    d_i \geq \frac{4n^2}{N-2n+1}+2.
  \]
  In our situation, \(N=n+k\), and the complete intersection has type \((m,\dotsc,m)\).
\end{proof}

\begin{rema}
  The methods used in~\cite{CDR20} (Riemmann--Roch) and in~\cite{CR20} (Morse inequalities) do not provide any explicit global symmetrc differential. Hence the existence of a lot a global sections does not provide any precise geometric information on the augmented base locus. On the counterpart, the orbifold multiplicity in Theorem~\ref{theo:orbibig} is extremely low, and there is no genericity assumption on \(\mathscr{A}\).
\end{rema}

\section{Applications to complex hyperbolicity}
\label{se:hyperb}
\def\barI{\smash{\overline I}\phantom{I}}
\subsection{Entire curves in Fermat covers}
Hyperbolicity properties of Fermat hypersurfaces have been studied by several people. 
One can find in~\cite[Example~3.10.21]{Kob98} the following result.
\begin{theo}[Kobayashi]
  \label{theo:FCK}
  Consider the Fermat hypersurface of degree \(m\) 
  \[
    F(n,m)
    \bydef
    \{z_0^m+\dots+z_{n+1}^m=0\} \subseteq \P^{n+1}.
  \]
  If \(m \geq (n+1)^2 \) then every entire curve \(f\colon \C \to F(n,m)\) lies in a linear subspace of dimension at most \(\lfloor n/2 \rfloor\).
\end{theo}
The proof of~\cite{Kob98} consists in using the fact that \(F(n,m)\) is a Fermat cover of \(\P^n\) ramified over \((n+2)\) hyperplanes \(H_i\) with multiplicity \(m\). Then the result is a consequence of the truncated defect of Cartan (see~\cite[3.B.42]{Kob98}) which gives the linear degeneracy of orbifold entire curves \(f\colon\C \to (\P^n, \sum_{i=0}^{n+1}(1-1/m)H_i)\) provided that \(\sum_{i=0}^{n+1}(1-n/m)^+>n+1\). 

In~\cite[Ex.~11.20]{Dem97}, algebraic degeneracy of entire curves in \(F(n,m)\) is also obtained using jet differentials. It gives the same degree estimate but not the second assertion on the linear subspace of dimension \(\leq \lfloor n/2 \rfloor\) containing the image of the entire curve.

\begin{rema}
  The dimension of the linear subspace in Theorem~\ref{theo:FCK} is (at least) almost optimal.
  In the setting of Remark~\ref{rema:standardlines}, if instead of considering rational lines, one now considers entire curves as in Theorem~\ref{theo:FCK}, and one takes \(r=\lfloor n/2\rfloor\), one infers that the dimension of the linear subspaces needed for some curves in Theorem~\ref{theo:FCK} cannot be less than \(\lfloor n/2\rfloor-1\).
\end{rema}

As a consequence of Theorem~\ref{theo:orbinog}, we obtain the following result on hyperbolicity of Fermat covers as introduced in Section~\ref{sse:FC}.
\begin{alphtheo}
  \label{theo:fermathyp}
  The Fermat cover associated to an arrangement \(\mathscr A\) of \(d\geq\binom{n+2}{2}\) hyperplanes in \(\P^{n}\) in general position with respect to hyperplanes and to quadrics, with ramification \(m\geq 2n+2\) is Kobayashi-hyperbolic.
\end{alphtheo}
\begin{proof}
  Let \(\pi\colon Y\to(\P^n,\Delta)\) be the associated Fermat cover.
  Since \(Y\) is compact, it is sufficient to prove that \(Y\) is Brody hyperbolic. Let \(f\colon \C \to Y\) be an entire curve. Theorem~\ref{theo:orbinog} implies that \(f(\C)\) is contained in the ramification locus of \(\pi\colon Y\to(\P^n,\Delta)\). Now we remark that the ramification locus has a natural structure of Fermat cover associated to an induced arrangement \(\mathscr{A}^1\) of \(d\geq \binom{n+2}{2}\)-1 hyperplanes in \(\P^{n-1}\) with multiplicity \(m\). 
  Up to removing some members, one can still assume that this arrangement is in general position with respect to hyperplanes and to quadrics, by Lemma~\ref{lemm:gal.pos} below.
  Using inductively Theorem~\ref{theo:orbinog}, we obtain the hyperbolicity of the Fermat cover associated to \(\mathscr{A}\).
\end{proof}
\begin{lemm}
  \label{lemm:gal.pos}
  Let \(\mathscr{A}\) be an arrangement of \(d\geq\binom{n+2}{2}\) hyperplanes in \(\P^n\) in general position with respect to hyperplanes and to quadrics. 
  Let \(\mathscr{A}^{\barI}\) be the arrangement obtained by removing \(\abs{I}\) hyperplanes, indexed by \(I\).
  In \(\bigcap_{i\in I}H^i\simeq\P^{n-\abs{I}}\),
  there is a subarrangement \(\mathscr{A}'\) of \(\mathscr{A}^{\barI}\) with at least \(\binom{n-\abs{I}+2}{2}\) members, such that \(\mathscr{A}'\cap \P^{n-\abs{I}}\) is in general position with respect to hyperplanes and to quadrics.
\end{lemm}
\begin{proof}
  The situation being symmetric, we can safely assume that we are in the setting and notation of Sect.~\ref{se:lognog}, and that we have removed the \(\abs{I}\) last coordinate hyperplanes.
  Clearly, any subarrangement is still in linear position, because if \(n+1-\abs{I}\) hyperplanes of \(\mathscr{A}^{\barI}\) would satisfy a single linear equation in \(\P^{n-\abs{I}}\) then these hyperplanes together with the \(\abs{I}\) last coordinate hyperplanes would satisfy the same linear equation in \(\P^{n}\).
  Now, we want to prove that at least one subarrangement is also in general position with respect to quadrics. For any \(\binom{n+2}{2}-\abs{I}\) hyperplanes in \(\mathscr{A}^{\barI}\), containing the \(n+1-\abs{I}\) first coordinate hyperplanes, we have seen that the general position with respect to quadrics is equivalent to the non-vanishing of the determinant of the matrix \(A_ {[2]}\). We split the rows of \(A_{[2]}\) in two blocks: those involving the \(\abs{I}\) last coordinates, and those that does not.
  By Laplace expansion with respect to these blocks, there is at least one minor in the second block that is not zero.
  We take the \(\binom{n-\abs{I}+1}{2}\) hyperplanes corresponding to the columns involved in one such minor, and the \((n-\abs{I}+1)\) first hyperplane coordinates, and we get \(\binom{n-\abs{I}+2}{2}\) hyperplanes in general position with respect to hyperplanes and to quadrics in \(\P^{n-\abs{I}}\).
\end{proof}

A complete intersection of general Fermat hypersurfaces cannot be reduced to a Fermat cover. Moreover, we cannot use openness of ampleness in families without additional efforts (see~\cite{BDe18}). However, it is most likely that the results obtained in the present work for Fermat covers would generalize to complete intersections of general Fermat hypersurfaces. We even think that this problem should be accessible using the technics involved in this work. As an example, we were able to prove that general complete intersection surfaces of Fermat type in \(\P^{2+k}\) have ample cotangent bundles modulo ramification for \(k\geq3\), under some explicit algebraic condition on their coefficients. For higher dimensions, computations become tedious, and we would probably need to use (explicit) resultant theory in order to conclude.
This is far beyond the scope of this work. Let us hence formulate the expected results as questions.
\begin{ques}
  Do general \(n\)-dimensional complete intersections of Fermat type with sufficiently large codimension (e.g. \(k\geq\binom{n+1}{2}\)) and ramification order (e.g. \(m\geq 2n+2\)) have ample cotangent bundles modulo their ramification loci? 
\end{ques}
\begin{ques}
  When are these general complete intersections of Fermat type Kobayashi-hyperbolic?
\end{ques}

\subsection{An orbifold Brody's criterion}
Let \((X,\Delta)\) be an orbifold with  
$\Delta
=
\sum_{i\in I}
(1-\myfrac{1}{m_{i}}) \Delta_{i}$.
Following \cite{CW09} it is natural to define (Kobayashi) hyperbolicity of \((X,\Delta)\) considering holomorphic maps $h\colon\mathbb{D}\to (X,\Delta)$ from the unit disk \(\mathbb D\) to \(X\) satisfying the two conditions:
\begin{itemize}
  \item $h(\mathbb{D}) \not \subset |\Delta|$.
  \item ${\mult}_x(h^*\Delta_{i}) \geq m_i$ for all $i$ and $x\in \mathbb{D}$ with $h(x) \in |\Delta_{i}|$.
\end{itemize}
Such maps are called \textsl{orbifold} maps \(\mathbb{D}\to(X,\Delta)\). 
Orbifold entire curves \(\C\to(X,\Delta)\) are defined \textit{mutadis mutandis}.

Then, one defines the \textsl{orbifold Kobayashi pseudo-distance} $d_{(X,\Delta)}$ as the largest pseudo-distance on $X \setminus \lfloor \Delta \rfloor$ such that every orbifold map from the unit disk is distance-decreasing with respect to the Poincar\'e distance on the unit disk. A pair \((X,\Delta)\) is said \textsl{Kobayashi-hyperbolic} if $d_{(X,\Delta)}$ is a distance on $X \setminus \lfloor \Delta \rfloor$.

Besides, a pair \((X,\Delta)\) is said \textsl{Brody-hyperbolic} if it does not admit any non-constant orbifold entire curve \(\C\to(X,\Delta)\). Kobayashi-hyperbolicity implies Brody hyperbolicity. 
Brody's theorem characterizes Kobayashi-hyperbolicity in terms of Brody-hyperbolicity. We will now give an orbifold version of this result.

We have the following proposition which slightly refines \cite{CW09}.
\begin{prop}
  Let $\big(X, \Delta\bydef\sum_{i=0}^{d}(1-1/m_i)\Delta_i\big)$ be an orbifold. Assume that a sequence of orbifold maps  $h_p\colon \mathbb{D} \to (X, \Delta)$ from the unit disk converges locally uniformly to a holomorphic map $h\colon \mathbb{D} \to X$. 
  Let \(X_h\bydef\bigcap_{h(\mathbb{D})\subseteq\Delta_i}\Delta_i\subseteq X\), 
  and let \(\Delta_h\bydef \sum_{h(\mathbb{D})\not\subseteq\Delta_j}(1-1/m_j)\Delta_j\cap X_h\).
  Then, $h$ is an orbifold map $\mathbb{D}\to(X_h, \Delta_h)$.
\end{prop}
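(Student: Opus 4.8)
The goal is to show that the limit map $h$ is an orbifold map to the ``reduced'' orbifold $(X_h,\Delta_h)$, which by definition requires checking two things: first, that $h(\mathbb D)\not\subseteq\abs{\Delta_h}$, and second, that for every component $\Delta_j$ appearing in $\Delta_h$ (i.e.\ with $h(\mathbb D)\not\subseteq\Delta_j$) and every $x\in\mathbb D$ with $h(x)\in\Delta_j\cap X_h$, one has $\mult_x\big(h^*(\Delta_j\cap X_h)\big)\geq m_j$. The idea is to pull back local defining equations and pass to the limit using Hurwitz's theorem.

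First I would deal with the non-degeneracy condition. By construction $X_h=\bigcap_{h(\mathbb D)\subseteq\Delta_i}\Delta_i$ is the smallest ``coordinate stratum'' containing $h(\mathbb D)$, so $h(\mathbb D)\subseteq X_h$ and $h(\mathbb D)$ is not contained in any further component $\Delta_j\cap X_h$ cut out on $X_h$ — exactly the components kept in $\Delta_h$. So $h\colon\mathbb D\to X_h$ with $h(\mathbb D)\not\subseteq\abs{\Delta_h}$; one should also note $h_p(\mathbb D)\subseteq X_h$ for $p$ large, since the components $\Delta_i$ with $h(\mathbb D)\subseteq\Delta_i$ satisfy $\mult_x(h_p^*\Delta_i)\geq m_i\geq 1>0$ wherever $h_p$ meets $\Delta_i$, and local uniform convergence forces $h_p$ into a neighborhood of $X_h$; more directly, the orbifold condition on $h_p$ together with $h(\mathbb D)\subseteq\Delta_i$ makes $h_p^*\Delta_i$ not identically zero only if $h_p(\mathbb D)\subseteq\Delta_i$, which one argues via Hurwitz as below. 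Actually the cleanest route: fix $i$ with $h(\mathbb D)\subseteq\Delta_i$, pick a local equation $g_i$ of $\Delta_i$; then $g_i\circ h\equiv 0$, while $g_i\circ h_p\to g_i\circ h\equiv 0$ locally uniformly, and since each zero of $g_i\circ h_p$ has order $\geq m_i$, Hurwitz's theorem gives $g_i\circ h_p\equiv 0$ for $p$ large (a sequence of nowhere-vanishing-except-with-high-order functions cannot converge to $0$ unless eventually zero). Hence $h_p$ factors through $X_h$ eventually.

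The heart of the argument is the multiplicity estimate, and this is the step I expect to be the main obstacle. Fix $j$ with $h(\mathbb D)\not\subseteq\Delta_j$ and a point $x_0\in\mathbb D$ with $h(x_0)\in\Delta_j$. Choose a local chart on $X$ near $h(x_0)$ in which $\Delta_j=\{g_j=0\}$ and $X_h=\{g_{i_1}=\dots=g_{i_r}=0\}$; the function $g_j\circ h$ is holomorphic, not identically zero (since $h(\mathbb D)\not\subseteq\Delta_j$), with $g_j\circ h(x_0)=0$, so it has a finite zero order $\nu$ at $x_0$. On the other hand $g_j\circ h_p\to g_j\circ h$ locally uniformly near $x_0$; the limit $g_j\circ h$ is not identically zero, so by Hurwitz's theorem, for $p$ large the function $g_j\circ h_p$ has exactly $\nu$ zeros (with multiplicity) in a small disk around $x_0$. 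But $h_p$ is an orbifold map, so every zero of $g_j\circ h_p$ in that disk has multiplicity $\geq m_j$; since $g_j\circ h_p$ is not identically zero on the disk for $p$ large (again because the limit is not), and it has at least one zero there for $p$ large (by Hurwitz, since $x_0$ is a genuine zero of the limit), we get $\nu\geq m_j$. This gives $\mult_{x_0}(h^*\Delta_j)\geq m_j$, hence $\mult_{x_0}\big(h^*(\Delta_j\cap X_h)\big)\geq m_j$ on $X_h$. The delicate points to handle carefully are: (i) making sure $h_p$ really maps into $X_h$ so that ``$\Delta_j\cap X_h$'' is the right object and the multiplicity computed on $X_h$ agrees with the one computed via $g_j$ in the ambient chart; (ii) the uniform control needed so that the Hurwitz count stabilizes — this is where one uses that $g_j\circ h$ is not identically zero, so its zeros are isolated and one can separate them; (iii) if $h$ were constant equal to a point of $\Delta_j\cap X_h$ one should note this does not arise here since $h(\mathbb D)\not\subseteq\Delta_j$. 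Assembling (i)–(iii), $h$ satisfies both defining conditions of an orbifold map to $(X_h,\Delta_h)$, which completes the proof.
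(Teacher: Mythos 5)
Your core argument is correct and is essentially the paper's proof: pull back a local defining function $f_j$ of a component $\Delta_j$ with $h(\mathbb D)\not\subseteq\Delta_j$, note that $f_j\circ h$ is not identically zero, and use Rouch\'e/Hurwitz to identify the vanishing order of $f_j\circ h$ at $x_0$ with the sum of the orders of the nearby zeros of $f_j\circ h_p$, each of which is at least $m_j$ by the orbifold condition on $h_p$. One step of your write-up is, however, false as stated: the claim that Hurwitz forces $g_i\circ h_p\equiv 0$ for large $p$ when $g_i\circ h\equiv 0$, hence that the $h_p$ eventually factor through $X_h$. A sequence of nowhere-vanishing (or high-order-vanishing) holomorphic functions can perfectly well converge locally uniformly to the zero function --- take $g_i\circ h_p$ to be a small nonzero constant; concretely, the $h_p$ may avoid $\Delta_i$ entirely while converging to a map whose image lies in $\Delta_i$, and Hurwitz gives no conclusion when the limit vanishes identically. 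Fortunately this claim is also unnecessary: only the limit $h$ needs to land in $X_h$ (which holds by definition of $X_h$), and since $\Delta_j\cap X_h$ is cut out on $X_h$ by the restriction of the ambient equation $g_j$, the multiplicity of $h^*(\Delta_j\cap X_h)$ is exactly $\mult_{x_0}(g_j\circ h)$; the Hurwitz count only uses the orbifold condition on the $h_p$ as maps to the ambient pair $(X,\Delta)$, not any factorization through $X_h$. Deleting that paragraph and your point (i) leaves a correct proof identical in substance to the paper's.
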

\begin{proof}
  Suppose that $h(0) \in |\Delta|$. Consider a neighbourhood \(V\) of $h(0)$ in $X$ such that $\abs\Delta\cap V$ is locally defined by a holomorphic function $\prod f_i$, where $f_i=0$ defines $\Delta_i\cap V$. 
  If \(h(D)\not\subseteq \Delta_j\), one can assume that $f_j\circ h$ has no zero in $V$ except at $0$.

  Apply the classical theorem of Rouché to a sequence of holomorphic function $\{f_j \circ h_p\}$.
  For all sufficiently large $p$ the multiplicity at $0$ of $f_j\circ h$ equals the sum of all multiplicities of all zeroes in $V$ of $f_j\circ h_p$.
  Therefore this multiplicity is at least $m_j$ because $h_p$ are orbifold maps. 
\end{proof}

As an immediate consequence, reasoning exactly as in~\cite[Sect.~13]{CW09}, we obtain the following result.
\begin{theo}[orbifold Brody's criterion]
  \label{theo:brodyorb}
  Consider a smooth orbifold pair 
  \[
    \left(X,\Delta\bydef\sum_{i=0}^{d}(1-1/m_i)\Delta_i\right).
  \]
  For a subset $I$ of $\Set{0,\dotsc,d}$,
  let \(X_I\bydef\cap_{i\in I}\Delta_i\),
  and let \(\Delta_{\barI}\bydef\sum_{j\not\in I} (1-1/m_j) \Delta_j\cap X_I\).
  If all pairs \((X_I,\Delta_{\barI})\) are Brody-hyperbolic, then the pair \((X,\Delta)\) is Kobayashi-hyperbolic.
\end{theo}

\subsection{Orbifold hyperbolicity}
Now we are in position to derive 
from Theorem~\ref{theo:orbinog}
an hyperbolicity result for the orbifold pair 
\[
  \left(\P^n, \Delta\bydef\sum_{i=0}^{d}\left(1-\frac{1}{m}\right)H_i\right).
\]
We will use Fermat cover, in the opposite direction as Kobayashi did in Theorem~\ref{theo:FCK}. Remark however that this orbifold hyperbolicity is not directly implied by Theorem~\ref{theo:fermathyp}, because orbifold curves do not lift in general to the Fermat cover. Nevertheless, techniques introduced in~\cite{CDR20} will permit to use it.
Indeed, Corollary~3.7 of~\cite{CDR20} yields the following Proposition.
\begin{prop}[Fundamental vanishing theorem]
  \label{prop:ample}
  Let \((X,\Delta)\) be a smooth orbifold pair. Then any orbifold entire curve is contained in  $\BB_+(\Omega(X,\Delta)).$
\end{prop}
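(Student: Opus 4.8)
The plan is to deduce the statement from the orbifold fundamental vanishing theorem (which is essentially what \cite[Cor.~3.7]{CDR20} provides), together with a density argument that reduces everything to the points of $f(\C)$ lying off the boundary and away from the critical points of $f$. We take $f$ non-constant. Then $f'\not\equiv 0$, and since $f(\C)\not\subseteq|\Delta|$ one has $f(\C)\not\subseteq\Delta_i$ for each component $\Delta_i$, so every $f^{-1}(\Delta_i)$ is discrete and hence so is $f^{-1}(|\Delta|)$. Thus $S\bydef\{t\in\C:f'(t)\neq 0\text{ and }f(t)\notin|\Delta|\}$ is the complement of a discrete subset of $\C$, hence dense, and $f(S)\subseteq X\setminus|\Delta|$. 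Each base locus occurring in the definition of $\BB_+(\Omega(X,\Delta))$ is closed, being the support of the cokernel of an evaluation morphism of coherent sheaves, so $\BB_+(\Omega(X,\Delta))$ is closed and it suffices to prove $f(S)\subseteq\BB_+(\Omega(X,\Delta))$.

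Fix $p/q\in\Q_{>0}$ and $N>0$; put $L\bydef\O_X(NpA)$ (an ample line bundle) and $E\bydef S^{[Nq]}\Omega(X,\Delta)\otimes(A^\vee)^{\otimes Np}=S^{[Nq]}\Omega(X,\Delta)\otimes L^{-1}$. The key input is the orbifold fundamental vanishing theorem: for every $\omega\in H^0(X,E)$ one has $f^*\omega\equiv 0$ on $\C$. This is where the orbifold nature of $f$ enters: since $\omega$ is a section of $S^{[Nq]}\Omega(X,\Delta)\subseteq S^{Nq}\Omega_X(\log|\Delta|)$ and $\mult_x(f^*\Delta_i)\geq m_i$ at every $x$ with $f(x)\in|\Delta_i|$, the local compatibility built into Campana's formalism makes $f^*\omega$ a genuine \emph{holomorphic} symmetric differential of degree $Nq$ on $\C$ with values in the trivial line bundle $f^*L^{-1}$, hence of the form $g(t)\,(dt)^{Nq}$ for some entire $g$ (after trivializing); and the ampleness of $L$ forces $g\equiv 0$ by the standard argument exploiting positivity of $L$ (Ahlfors--Schwarz / logarithmic derivative lemma). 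All of this is \cite[Cor.~3.7]{CDR20}.

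Now take $t_0\in S$ and set $x_0\bydef f(t_0)\notin|\Delta|$; I claim $x_0\in\Bs(E)$. Off $|\Delta|$ the orbifold structure is trivial, so $E_{x_0}=S^{Nq}\Omega_{X,x_0}\otimes L^{-1}_{x_0}$. For any $\omega\in H^0(X,E)$, contracting $f^*\omega=g(t)(dt)^{Nq}$ with $(\partial/\partial t)^{\otimes Nq}$ yields $g(t)=\big\langle\omega(f(t)),\,f'(t)^{\otimes Nq}\big\rangle$; by the vanishing above this is $0$, so in particular $\big\langle\omega(x_0),\,f'(t_0)^{\otimes Nq}\big\rangle=0$. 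Hence the image of the evaluation map $H^0(X,E)\to E_{x_0}$ is contained in the subspace $\{v\in E_{x_0}:\langle v,\,f'(t_0)^{\otimes Nq}\rangle=0\}$, and this subspace is \emph{proper}: as $f'(t_0)\neq 0$, the symmetric tensor $f'(t_0)^{\otimes Nq}$ is a non-zero element of $S^{Nq}T_{X,x_0}$, so pairing against it is a non-zero linear form on $S^{Nq}\Omega_{X,x_0}$. Therefore the evaluation at $x_0$ is not surjective, i.e. $x_0\in\Bs(E)$. Since $p/q$ and $N$ were arbitrary, $f(S)\subseteq\BB_+(\Omega(X,\Delta))$, and the first paragraph concludes the proof.

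The only genuinely analytic content, and hence the main obstacle, is the identical vanishing $f^*\omega\equiv 0$ of the second paragraph: the holomorphicity of $f^*\omega$ on all of $\C$ is the local orbifold compatibility between ``$\omega\in S^{[N]}$'' and ``$f$ orbifold'', and its vanishing is the Schwarz-type estimate coming from ampleness of $L$. Both are provided by \cite[Cor.~3.7]{CDR20}, so in practice this step is a citation and everything else is the elementary base-locus bookkeeping above; note in particular that the argument deliberately never lifts $f$ to an adapted cover, which is essential since orbifold entire curves need not lift.
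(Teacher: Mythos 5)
Your proof is correct and takes essentially the same route as the paper, which simply invokes \cite[Cor.~3.7]{CDR20} without further argument; you have spelled out the deduction the paper leaves implicit, namely that the pointwise vanishing $\langle\omega(f(t)),f'(t)^{\otimes Nq}\rangle\equiv 0$ at non-critical points off $\abs{\Delta}$ forces non-surjectivity of the evaluation maps for every twist $S^{[Nq]}\Omega(X,\Delta)\otimes(A^\vee)^{\otimes Np}$, and then uses closedness of $\Bs$ and density of $S$. The only (harmless) reading conventions you rely on are that the statement concerns non-constant curves and that the intersection defining $\BB_+(\Omega(X,\Delta))$ is taken over positive twists $p/q>0$, both of which match the paper's intended use of the proposition.
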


We obtain the following.
\begin{alphtheo}
  \label{theo:orbihyp}
  Consider an arrangement \(\mathscr A\) of \(d\) hyperplanes \(H_1,\dotsc,H_d\) in \(\P^n\) in general position with respect to hyperplanes and to quadrics, with respective orbifold multiplicities \(m_i\), and the associated orbifold divisor \(\Delta\bydef\sum_{i=0}^d(1-1/m_i)\cdot H_i\).
  If \(d\geq\binom{n+2}{2}\) and \(m_i\geq 2n+2\), then the orbifold pair \((\P^n,\Delta)\) is hyperbolic.
\end{alphtheo}
\begin{proof}
  By Theorem~\ref{theo:brodyorb}, if \((\P^n,\Delta)\) is not hyperbolic, then either there exists a non-constant orbifold entire curve \(f\colon \C \to (\P^N, \Delta)\) or an orbifold curve in the boundary divisor.
  Theorem~\ref{theo:orbinog} and Proposition~\ref{prop:ample} imply that all orbifold entire curves \(f\colon\C\to(\P^n,\Delta)\) are constant. So, we are left with the second possibility. In this case, according to Theorem~\ref{theo:brodyorb}, \(f\) is a non-constant orbifold map with respect to an orbifold structure $(\P^{n-\abs I},\Delta_{\barI})$ induced by the arrangement \(\mathscr{A}^{\barI}\) of \(\binom{n+2}{2}-|I|\geq\binom{n+2-\abs{I}}{2}\) hyperplanes.
  We conclude by induction, using Lemma~\ref{lemm:gal.pos}.
\end{proof}
\begin{rema}
  It follows that \((\P^n,\Delta)\) is Brody-hyperbolic.
  Actually, one can exclude the existence of non-constant \textsl{orbifold correspondences} on varieties with orbifold cotangent bundles that are ample modulo boundary. Cf.~\cite{CDR20} for a definition. These are the morphisms that one would naturally consider to generalize entire curves in the orbifold category (and these are much more numerous).
\end{rema}

\bigskip

\paragraph{Acknowledgments}
L.D. and E.R. would like to thank \emph{Joël Merker} for interesting discussions on explicit orbifold sections and around resultant which helped a lot to find the right attack angle for our problem.

L.D. would like to thank \emph{Henri Guenancia} for his help on augmented base loci and particularly around Lemma~\ref{lemm:bplus}, which plays an important role in the reformulation of Noguchi's result. L.D. would also like to thank \emph{Mikhail Zaidenberg} for many interesting discussions over the years and for making him aware of Theorem~\ref{theo:optimal}.
These interactions took place during the conference Alkage hosted by \emph{Jean-Pierre Demailly}, which gave L.D. a great opportunity to present a preliminary version of this work to a distinguished audience.

E.R. would like to thank \emph{Stefan Kebekus} and \emph{Tanuj Gomez} for discussions on the positivity of orbifold cotangent bundles, and \emph{Eric Riedl} for discussions on bigness of cotangent bundles.

L.D. would like to thank \emph{Frédéric Han} for identifying the condition of general position with respect to quadrics.

Lastly, L.D. would like to warmly thank \emph{Damian Brotbek} for introducing him in much detail to his work on explicit symmetric differential forms during the supervision of his postdoc in Strasbourg, and for the fruitful collaboration that followed.

\bibliographystyle{smfalpha}
\bibliography{noguchi}
\vfill
\end{document}